\renewcommand{\(}{\left\(}
\renewcommand{\)}{\right\)}
\renewcommand{\[}{\left\[}
\renewcommand{\]}{\right\]}
\numberwithin{equation}{section}
 \theoremstyle{plain}
\newtheorem{theorem}{Theorem}[section]
\newtheorem{remark}[]{Remark}
\newtheorem{conjecture}[theorem]{Conjecture}
\newtheorem{corollary}[theorem]{Corollary}
\newtheorem{proposition}[theorem]{Proposition}
\def\proof{\@ifnextchar[{\@oproof}{\@nproof}}
\def\@oproof[#1][#2]{\trivlist\item[\hskip\labelsep\textit{#2 Proof of\
#1.}~]\ignorespaces}
\def\@nproof{\trivlist\item[\hskip\labelsep\textit{Proof.}~]\ignorespaces}
\begin{document}
\title{An inequality between finite analogues of rank and crank moments} 

\author{Pramod Eyyunni, Bibekananda Maji and Garima Sood}\thanks{2010 \textit{Mathematics Subject Classification.} Primary 11P80, 11P81, 11P82; Secondary 05A17\\
\textit{Keywords and phrases.} partitions, finite analogues, smallest parts function, moments inequality, symmetrized moments}
\address{Discipline of Mathematics, Indian Institute of Technology Gandhinagar, Palaj, Gandhinagar 382355, Gujarat, India} 
\email{pramodeyy@gmail.com, bibek10iitb@gmail.com\\garimasood18@gmail.com}
\maketitle
\begin{center}
{\it
Dedicated to Professor Bruce C. Berndt on the occasion of his 80th birthday}
\end{center}
\begin{abstract}

The inequality between rank and crank moments was conjectured and later proved by Garvan himself in 2011. Recently, Dixit and the authors introduced finite analogues of rank and crank moments for vector partitions while deriving a finite analogue of Andrews' famous identity for smallest parts function. In the same paper, they also conjectured an inequality between finite analogues of rank and crank moments, analogous to Garvan's conjecture. In the present paper, we give a proof of this conjecture. 

\end{abstract}

\section{Introduction}
Let $p(n)$ denote the number of unrestricted partitions of a positive integer $n$. To give a combinatorial explanation of the famous congruences of Ramanujan for the partition function $p(n)$, namely, for $ m\geq 0,$
\begin{align*}
p(5m + 4) & \equiv 0 \pmod 5,\\
p(7m + 5) & \equiv 0 \pmod 7,
\end{align*}
Dyson \cite{dys} defined the rank of a partition as the largest part minus the number of parts. He also conjectured that there must be another statistic, which he named `crank', that would explain Ramanujan's third congruence, namely,
 \begin{equation*}
 p(11m + 6) \equiv 0 \pmod{11}.
 \end{equation*}
 After a decade, Atkin and Swinnerton-Dyer \cite{atkin2} confirmed Dyson's observations for the first two congruences for $p(n)$. Also, in 1988, `crank' was discovered by Andrews and Garvan \cite{andrewsgarvan88}. 
An interesting thing to note is that by using the partition statistic `crank', Andrews and Garvan were able to explain not only the third congruence but also the first two. 
Atkin and Garvan \cite{atkin1} found that the moments of ranks and cranks were important in the study of further partition congruences. In particular, they defined the ${k}^{th}$ moments of rank and crank, respectively as, 
\begin{align*}
N_{k}(n)=\sum_{m=-\infty}^{\infty} m^{k} N(m,n),\\
M_{k} (n) = \sum_{m=-\infty}^{\infty}m^{k} M(m, n),
\end{align*}
where $N(m,n)$ and $M(m,n)$ denote the number of partitions of $n$ with rank $m$ and crank $m$ respectively. 
In 2008, Andrews \cite{andrews08} introduced the smallest parts function $\mathrm{spt}(n)$ as the total number of appearances of the smallest parts in all partitions of $n$ and showed that
\begin{equation*}
\text{spt}{(n)}=np(n)-\frac{1}{2}N_2(n).
\end{equation*} 
Using Dyson's identity \cite[Theorem 5]{dyson89}, i.e., 
$np(n)=\frac{1}{2}M_2(n) $, we can rewrite this as
\begin{equation}\label{Andrews_spt_identity}
\text{spt}{(n)}=\frac{1}{2}M_2(n)-\frac{1}{2}N_2(n).
\end{equation}
From this result, it is immediate that $M_2(n)>N_2(n)$.
Garvan \cite[Conjecture (1.1)]{garvan10} conjectured  that
\begin{equation}\label{garvan's conjecture}
M_{2k} (n)>N_{2k} (n),
\end{equation}
for all $k>1$ and $n\geq1$. 
Studying the asymptotic behavior of the difference $M_{2k}(n)-N_{2k}(n)$, Bringmann and Mahlburg \cite{BM09} proved \eqref{garvan's conjecture} for $k=2,4$, and subsequently, for each fixed $k$, the  inequality was proved for sufficiently large $n$ by  Bringmann, Mahlburg and Rhoades \cite{BMR11}.
Later, Garvan \cite{garvan10} himself proved
his conjecture for all $n$ and $k$ with the help of a combinatorial interpretation for the difference
between symmetrized crank and rank moments.
Andrews \cite{andrews07} defined the $k^{th}$ symmetrized rank moment as
\begin{equation*}
\eta_{k}(n):=\sum_{m=-n}^{n} \left(\begin{matrix} m+ \lfloor \frac{k-1}{2} \rfloor \\ k\end{matrix}\right)N(m,n).
\end{equation*}
Andrews \cite[Theorem 2]{andrews07} showed that the odd moments are all identically zero and also obtained the generating function for even moments $\eta_{2k}(n)$, that is,
for any $k \geq 1$, we have
\begin{align}
\sum_{n=1}^\infty \eta_{2k}(n) q^n & = \frac{1}{(q)_\infty} \sum_{n=1}^\infty \frac{(-1)^{n-1}(1+q^n) q^{\frac{n(3n-1)}{2}+kn}}{(1-q^n)^{2k}} \label{Andrews_genfn_symmetrized_rank}\\
& = \frac{1}{(q)_\infty} \sum_{\substack{ n=-\infty\\ n\neq 0}}^\infty \frac{(-1)^{n-1} q^{\frac{n(3n+1)}{2}+kn}}{(1-q^n)^{2k}}.\label{andrews_symm_rank2}
\end{align}
Analogous to the symmetrized rank moments $\eta_k(n)$, Garvan \cite{garvan11} introduced the $k^{th}$ symmetrized crank moment $\mu_k(n)$ in the study of the higher order
$\text{spt}$-function $\text{spt}_k(n)$. To be more specific,
\begin{align*}{}
\mu_{k}(n):=\sum_{m=-n}^{n} \left(\begin{matrix} m+ \lfloor \frac{k-1}{2} \rfloor \\ k\end{matrix}\right)M(m,n). 
\end{align*}
Analogous to \eqref{Andrews_genfn_symmetrized_rank} and \eqref{andrews_symm_rank2}, the generating function for the symmetrized crank moments was given by Garvan \cite[Theorem (2.2)]{garvan11}, that is, 
for any $k \geq 1$, we have
\begin{align} 
\sum_{n=1}^{\infty} \mu_{2k}(n) q^n & =  
\frac{1}{(q)_\infty }  \sum_{n=1}^\infty \frac{(-1)^{n-1}(1+q^n) q^{\frac{n(n-1)}{2}+kn}}{(1-q^n)^{2k}} \label{garvan_symmcrank_1}\\
& = \frac{1}{(q)_\infty} \sum_{\substack{ n=-\infty\\ n\neq 0}}^\infty \frac{(-1)^{n-1} q^{\frac{n(n+1)}{2}+kn}}{(1-q^n)^{2k}}.\label{garvan_symmcrank_2}
\end{align}
Garvan \cite[Equation (1.4)]{garvan11} also gave the following generating function for the symmetrized crank moments:
\begin{align}\label{mugf_alternate}
\sum_{n=1}^{\infty} \mu_{2k}(n) q^n =  
\frac{1}{(q)_\infty }\sum_{ n_k\geq  \cdots \geq n_2\geq1}\frac{q^{n_1+n_2+ \cdots +n_k} }{(1-q^{n_1})^2(1-q^{n_2})^2 \cdots (1-q^{n_k})^2}.
\end{align}
One of the main results in \cite{garvan11}, due to Garvan \cite[Equation (1.3)]{garvan11}, which was instrumental in proving the inequality between rank and crank moments is as follows: 
\begin{align}\label{difference_gen_garvan}
\sum_{n=1}^{\infty}( \mu_{2k}(n)- \eta_{2k}(n)) q^n = \sum_{ n_k\geq  \cdots \geq n_2\geq1}\frac{q^{n_1+n_2+ \cdots +n_k} }{(1-q^{n_1})^2(1-q^{n_2})^2 \cdots (1-q^{n_k})^2 (q^{n_1+1} ; q)_\infty},
\end{align}
for any $k \geq 1$.
One can easily check that for $k=1$, the above theorem reduces to \eqref{Andrews_spt_identity}. After this observation, Garvan defined higher order $\text{spt}$-function $\text{spt}_k(n)$ as
\begin{equation*}
\text{spt}_k(n) := \mu_{2k}(n)-\eta_{2k}(n),
\end{equation*} 
for all $k\geq1$ and $n\geq1$.
He also gave a combinatorial interpretation of $\text{spt}_{k}(n)$. 

In the next subsection, we shall describe recent developments related to Andrews' identity \eqref{Andrews_spt_identity} for the smallest parts function $\text{spt}(n)$. 
\subsection{Finite analogue of Andrews' spt-identity}
Ramanujan's identities are a constant source of inspiration for everyone and motivate us to do beautiful mathematics. 
Recently, Dixit and Maji \cite{dixitmaji18} found a generalization of a $q$-series identity \cite[p.~354]{ramanujantifr}, \cite[p. 263, Entry 3]{bcbramforthnote} of Ramanujan and derived many partition theoretic implications from this generalization. They also established a new identity \cite[Theorem 2.8]{dixitmaji18} involving Fine's function $F(a,b;t)$ \cite[p.~1]{fine} from which they were able to derive Andrews' identity \eqref{Andrews_spt_identity} for $\text{spt}(n)$. Very recently, together with Dixit, the authors found a finite analogue \cite[Theorem 1.1]{DEMS} of the aforementioned generalization of Dixit and Maji \cite[Theorem 2.1]{dixitmaji18}, whose special case gave a finite analogue of Andrews' $\text{spt}$-identity, namely,
\begin{theorem}\cite[Theorem 2.4]{DEMS}\label{finite analogue_Andrews_spt}
 For any natural numbers $n, N$, we have
 \begin{align*}
 \textup{spt}(n, N)= \frac{1}{2} \left( M_{2,N}(n) - N_{2,N}(n) \right),
 \end{align*}
 where $\textup{spt}(n, N )$ is the number of smallest parts in all partitions of $n$ whose corresponding largest parts are less than or equal to $N$, and $M_{2,N}(n)$ and $N_{2,N}(n)$ are defined below.
\end{theorem}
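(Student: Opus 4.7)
The plan is to mimic Andrews' generating-function derivation of \eqref{Andrews_spt_identity} at the truncated level, with the natural tool being the finite analogue of the Dixit--Maji generalization of Ramanujan's identity, namely \cite[Theorem 1.1]{DEMS}, which was designed precisely for bounded-largest-part situations.

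First, I would set up the three relevant generating functions. For the smallest parts count, decomposing a partition enumerated by $\textup{spt}(n,N)$ by its smallest part $k$, the multiplicity of $k$, and the partition consisting of the strictly larger parts (whose parts must lie in $\{k+1,\dots,N\}$), one obtains
\begin{equation*}
\sum_{n\geq 1}\textup{spt}(n,N)\,q^{n} \;=\; \sum_{k=1}^{N}\frac{q^{k}}{(1-q^{k})^{2}\,(q^{k+1};q)_{N-k}}.
\end{equation*}
For the truncated second moments $N_{2,N}(n)$ and $M_{2,N}(n)$, I would begin from the usual two-variable rank and crank generating functions, restrict to partitions with largest part $\leq N$, apply the operator $\left(z\tfrac{d}{dz}\right)^{2}$, and then evaluate at $z=1$; the output should be truncated versions of the classical expressions, each carrying a finite product in place of the usual prefactor $1/(q;q)_\infty$.

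With these three generating functions in hand, I would specialize the parameters of \cite[Theorem 1.1]{DEMS} so that one side of that identity collapses to the generating function of $M_{2,N}(n)-N_{2,N}(n)$, while the other side becomes twice the spt-generating function displayed above. Comparing coefficients of $q^{n}$ would then yield the claim. The main obstacle I anticipate is the bookkeeping of the truncations: in the classical $N=\infty$ case the global factor $1/(q;q)_\infty$ separates cleanly from the moment generating functions, whereas at the truncated level the finite products $(q^{k+1};q)_{N-k}$ couple the inner summation index $k$ to the boundary parameter $N$, so matching the two sides of the DEMS identity will require careful manipulation of partial $q$-products and, most likely, an interchange-of-summation argument to align the spt decomposition with the moment decomposition.
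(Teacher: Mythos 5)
Your overall architecture --- write down the generating function of $\textup{spt}(n,N)$, write down generating functions for the second moments, and match them through the finite analogue of the Dixit--Maji identity --- is the route taken in \cite{DEMS}, which is where this theorem is actually proved (the present paper only cites it, and alternatively recovers it as the $k=1$ case of Theorem \ref{difference_gen_fin_rank_crank}). Your spt generating function is correct, and it coincides with the $k=1$ case of the right-hand side of \eqref{gen_fin_rank_crank} once one writes $(q)_{n_1}/(q)_N=1/(q^{n_1+1};q)_{N-n_1}$.

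The genuine gap is in your second step. The quantities $M_{2,N}(n)$ and $N_{2,N}(n)$ are \emph{not} the crank and rank second moments of ordinary partitions of $n$ with largest part at most $N$; by \eqref{finrankmom} and \eqref{fincrankmom} they are moments of the vector-partition statistics $N_{S_1}(m,n)$ and $M_{S_2}(m,n)$ attached to the sets $S_1$ and $S_2$, whose two-variable generating functions are \eqref{RS1zqdefinition} and \eqref{fincrankgf}. For finite $N$ these are genuinely different objects from the naively truncated ones (they agree only in the limit $N\to\infty$), and if you carry out your plan literally the identity you would be trying to prove is false. For instance, at $N=1$ the only ordinary partition of $n$ with largest part at most $1$ is $1^n$, so $\textup{spt}(n,1)=n$, while its rank is $1-n$ and its crank is $-n$ (for $n\geq 2$), giving
\begin{equation*}
\tfrac{1}{2}\bigl(n^2-(1-n)^2\bigr)=n-\tfrac{1}{2}\neq n.
\end{equation*}
By contrast, with the definitions \eqref{RS1zqdefinition} and \eqref{fincrankgf} at $N=1$ one finds $\sum_n M_{2,1}(n)q^n=2q/(1-q)^3$ and $\sum_n N_{2,1}(n)q^n=2q^2/(1-q)^3$, whose half-difference is $q/(1-q)^2=\sum_n nq^n$, as required. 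So the moment generating functions must be extracted from $R_{S_1}(z;q)$ and $C_{S_2}(z;q)$ (equivalently, from the partial-fraction expansions \eqref{frgfbil} and \eqref{andrewsCS2}), not from a restriction of the classical rank and crank generating functions to bounded largest part; the rest of your plan can then go through.
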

In \cite[p.~9, Equations (2.9), (2.10)]{DEMS}, for $k\geq 1$, we defined finite analogues of rank and crank moments for vector partitions as 
\begin{align}
N_{k, N}(n) &:= \sum_{m=-\infty}^{\infty} m^k N_{S_1}(m, n),\label{finrankmom}\\
M_{k,N}(n)&:= \sum_{m=-\infty}^{\infty} m^k M_{S_2}(m, n),\label{fincrankmom}
\end{align}
where $N_{S_1}(m, n)$ and $M_{S_2}(m, n)$ are defined below in \eqref{ns1} and \eqref{ms1} respectively. 
From Theorem \ref{finite analogue_Andrews_spt}, it is immediate that $M_{2,N}(n) > N_{2,N}(n)$.  Analogous to Garvan's conjecture \eqref{garvan's conjecture},  we gave the following conjecture on the inequality between the finite analogues of $k^{\textup{th}}$ rank and crank moments, that is, 
\begin{conjecture}\cite[Conjecture 10.1]{DEMS}\label{finiteanalogcrankrankconjecture}
For any fixed natural number $N$ and even $k\geq 2$, 
\begin{equation*}
M_{k,N}(n) > N_{k,N}(n) \quad \mathrm{for \,\, all} \,\, n\geq 1. 
\end{equation*}
\end{conjecture}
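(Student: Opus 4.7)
The plan is to mirror Garvan's proof of his original conjecture \eqref{garvan's conjecture} from \cite{garvan11}, transported to the finite-$N$ setting. First I would introduce finite analogues of the symmetrized rank and crank moments by setting
\[
\eta_{k,N}(n):=\sum_{m=-n}^{n}\binom{m+\lfloor(k-1)/2\rfloor}{k}N_{S_1}(m,n),\qquad
\mu_{k,N}(n):=\sum_{m=-n}^{n}\binom{m+\lfloor(k-1)/2\rfloor}{k}M_{S_2}(m,n).
\]
The symmetry $m\mapsto-m$ of $N_{S_1}$ and $M_{S_2}$ (inherited from their unrestricted counterparts) forces the odd-order symmetrized moments to vanish. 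Starting from the two-variable generating functions for $N_{S_1}(m,n)$ and $M_{S_2}(m,n)$ already available in \cite{DEMS}, I would derive finite analogues of \eqref{Andrews_genfn_symmetrized_rank}, \eqref{garvan_symmcrank_1}, \eqref{garvan_symmcrank_2} and \eqref{mugf_alternate} by repeated coefficient extraction (equivalently, by differentiation in the tracking variable and specialisation, as in \cite[\S2]{garvan11}). The expected shape replaces the $1/(q)_\infty$ factor by a truncated $q$-Pochhammer reflecting the bound on the largest part.

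The crux of the argument is a finite-$N$ version of Garvan's difference identity \eqref{difference_gen_garvan}, which I expect to take the form
\[
\sum_{n=1}^\infty\bigl(\mu_{2k,N}(n)-\eta_{2k,N}(n)\bigr)q^n=\sum_{n_k\geq\cdots\geq n_1\geq 1}\frac{q^{n_1+n_2+\cdots+n_k}\,P_N(n_1;q)}{(1-q^{n_1})^2(1-q^{n_2})^2\cdots(1-q^{n_k})^2},
\]
where $P_N(n_1;q)$ is the polynomial truncation of $1/(q^{n_1+1};q)_\infty$ dictated by the restriction that the largest part is at most $N$. I would obtain this by subtracting the two finite generating functions from the previous step and telescoping just as in Garvan's argument; the right-hand side is then manifestly a power series in $q$ with non-negative coefficients and is strictly positive in every degree $n\geq 1$.

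Finally, to pass from the symmetrized moment inequality back to the ordinary-moment inequality I would invoke the polynomial expansion
\[
m^{2k}=\sum_{j=1}^{k}\alpha(k,j)\binom{m+j-1}{2j},
\]
in which the integer coefficients $\alpha(k,j)$ are non-negative with $\alpha(k,k)>0$ (these are essentially the central factorial numbers used in \cite{garvan10,garvan11}). Applying this termwise to \eqref{finrankmom} and \eqref{fincrankmom} gives
\[
M_{2k,N}(n)-N_{2k,N}(n)=\sum_{j=1}^{k}\alpha(k,j)\bigl(\mu_{2j,N}(n)-\eta_{2j,N}(n)\bigr),
\]
which is strictly positive for all $n\geq 1$ by the previous paragraph, proving Conjecture \ref{finiteanalogcrankrankconjecture}. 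The main obstacle in this scheme is the finite analogue of \eqref{difference_gen_garvan}: identifying the correct truncation $P_N(n_1;q)$ and pushing Garvan's delicate $q$-series rearrangement through in the presence of the parameter $N$---in particular, handling the boundary cases where the running indices $n_i$ approach $N$---is the genuinely new piece of work. Steps 1, 2 and 4 should be essentially mechanical given \cite{DEMS, andrews07, garvan10, garvan11}.
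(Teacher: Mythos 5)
Your plan is essentially the paper's proof: the same finite symmetrized moments, the same generating functions obtained by differentiating the two-variable generating functions of $N_{S_1}$ and $M_{S_2}$, the same difference identity (the paper writes your $P_N(n_1;q)$ as $(q)_{n_1}/(q)_N=1/(q^{n_1+1};q)_{N-n_1}$ and derives it by an inductive Bailey-pair argument, just as Garvan does), and the same conversion $M_{2k,N}(n)-N_{2k,N}(n)=\sum_{j=1}^{k}(2j)!\,S^*(k,j)\,(\mu_{2j,N}(n)-\eta_{2j,N}(n))$ with non-negative coefficients. One small correction: for $k\geq 2$ the series $\sum_{n}(\mu_{2k,N}(n)-\eta_{2k,N}(n))q^n$ begins at $q^{k}$, so these differences are only non-negative (not strictly positive) for $1\leq n<k$; the strict inequality for all $n\geq 1$ must come from the $j=1$ term, which works because $S^*(k,1)=1$ and $\mu_{2,N}(n)-\eta_{2,N}(n)=\textup{spt}(n,N)>0$ --- that is, you need $\alpha(k,1)>0$ rather than the $\alpha(k,k)>0$ you emphasize.
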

In the present paper, our main goal is to prove the above conjecture. We have already mentioned in this introduction that the theory of symmetrized rank and crank moments was developed by Andrews \cite{andrews07} and Garvan \cite{garvan11} respectively. Here, to prove the above conjecture we define finite analogues of symmetrized rank and crank moments and their generating functions. We follow similar techniques as employed by Garvan \cite{garvan11}.

\section{Main Results: Finite analogues of the $k^{th}$ symmetrized rank and crank moments}\label{S2}
Before defining finite analogues of symmetrized rank and crank moments we need to recall certain definitions from \cite[p.~7]{DEMS}. For the sake of completeness we reproduce them below.

Let $V_{1}=\mathcal{D}\times\mathcal{P}$ denote a set of vector partitions. So an element $\vec{\pi}$ of $V_1$ is of the form $(\pi_1, \pi_2)$, where the magnitude of $\vec{\pi}$ is 
given by $|\vec{\pi}|:=|\pi_1|+|\pi_2|$. 
Let $N$ be a positive integer. Then for any positive integer $j$ with $1\leq j \leq N$, set
\begin{align*}\label{S1}
S_{1}&:=\bigg\{\vec{\pi}\in V_1: \pi_1\hspace{0.5mm}\text{is either an empty partition or such that its parts lie in $[N-j+1, N]$ }\nonumber \\  & \quad\quad \text{and}\,\, \pi_2\hspace{1mm}\text{is an }
\text{unrestricted partition with Durfee square of size $j$}\bigg\}.
\end{align*}
For a vector partition $\vec{\pi}=(\pi_1, \pi_2)$ in $V_1$, let $w_r(\vec{\pi}) := (-1)^{\#(\pi_1)}$ be its weight and 
$\textup{rank}(\vec{\pi}):=\textup{rank}(\pi_2)$, its vector rank. Now define 
\begin{equation}\label{ns1}
N_{S_1}(m, n):=\sum_{j=1}^N N_{S_{1}}\left(m, n; \boxed{j}\right),
\end{equation}
where
\begin{equation*}
N_{S_{1}}\left(m, n; \boxed{j}\right):= \sum_{\vec{\pi} \in S_{1}, |\vec{\pi}|=n \atop \mathrm{rank}(\vec{\pi})=m} w_r(\vec{\pi}).
\end{equation*}
As observed in \cite{DEMS}, as $N \rightarrow \infty$, $N_{S_1}(m, n)$ equals $N(m, n)$, the number of ordinary partitions of $n$ with rank $m$.

We are now ready to define the finite analogue of the $k^{th}$ symmetrized rank function. Let $k, N$ be positive integers. Then for any $n \geq 1$,
\begin{equation}\label{FR}
\eta_{k,N}(n):=\sum_{m=-n}^{n} \left(\begin{matrix} m+ \lfloor \frac{k-1}{2} \rfloor \\ k\end{matrix}\right)N_{S_1}(m,n).
\end{equation} 
\begin{proposition} Let $N$ be a positive integer and $k$ be an odd positive integer. 
Then $\eta_{k,N}(n)=0$ for all $n\geq1$.
\end{proposition}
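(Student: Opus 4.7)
The plan is to mimic Andrews' vanishing argument for the classical symmetrized odd rank moments, reducing the claim to a rank symmetry for the vector-partition statistic $N_{S_1}(m,n)$ together with an elementary parity observation on the binomial coefficient.

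First I would establish the symmetry
\[
N_{S_1}(m,n) = N_{S_1}(-m,n)
\]
for every $m$ and $n$. By the definition \eqref{ns1}, it suffices to show $N_{S_1}(m,n;\boxed{j}) = N_{S_1}(-m,n;\boxed{j})$ for each $j$ with $1 \le j \le N$. Given $\vec{\pi} = (\pi_1,\pi_2) \in S_1$ with $|\vec{\pi}| = n$ and $\mathrm{rank}(\vec{\pi}) = m$, send it to $(\pi_1, \pi_2')$, where $\pi_2'$ is the conjugate partition of $\pi_2$. Conjugation is an involution that preserves the size and the Durfee square of $\pi_2$, and it sends a partition of rank $m$ to one of rank $-m$. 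Since $\pi_1$ is untouched, the weight $w_r = (-1)^{\#(\pi_1)}$ is preserved. Hence the map is a weight-preserving bijection between vector partitions in $S_1$ of magnitude $n$, Durfee square size $j$, and rank $m$, and those with rank $-m$, giving the desired equality.

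Next, I would examine the binomial coefficient in \eqref{FR} for odd $k$. Writing $k = 2\ell+1$, so that $\lfloor (k-1)/2\rfloor = \ell$, one has
\[
\binom{m+\ell}{k} = \frac{1}{k!}\prod_{i=-\ell}^{\ell}(m+i),
\]
a polynomial in $m$ which is a product of $2\ell+1$ factors symmetric about $m$. Replacing $m$ by $-m$ sends each factor $m+i$ to $-(m-i)$, and reindexing $i \mapsto -i$ shows that the whole product is multiplied by $(-1)^{2\ell+1} = -1$. Thus for odd $k$ the binomial coefficient is an odd polynomial in $m$.

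Finally I would combine the two ingredients: in the sum
\[
\eta_{k,N}(n) = \sum_{m=-n}^{n}\binom{m+\lfloor (k-1)/2\rfloor}{k}N_{S_1}(m,n),
\]
substitute $m \mapsto -m$, use the rank symmetry to replace $N_{S_1}(-m,n)$ by $N_{S_1}(m,n)$, and use the oddness of the binomial coefficient to pull out a minus sign, obtaining $\eta_{k,N}(n) = -\eta_{k,N}(n)$ and hence $\eta_{k,N}(n) = 0$. The only nontrivial step is the rank-symmetry bijection, and even that is essentially the classical conjugation argument applied componentwise to the vector partition; I expect no genuine obstacle.
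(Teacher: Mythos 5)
Your proof is correct and rests on the same mechanism as the paper's one-line argument, namely the symmetry $N_{S_1}(m,n)=N_{S_1}(-m,n)$ combined with the oddness in $m$ of the binomial weight for odd $k$ (the paper phrases this as a reduction to the vanishing of the odd moments $N_{k,N}(n)$, which follows from that same symmetry). Your version is simply more self-contained, supplying the conjugation bijection and the parity computation that the paper leaves to the reader.
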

This is straightforward from the fact that the finite analogues of all the odd rank moments $N_{k, N}(n)$ are zero. We now give an expression for the generating 
function of $\eta_{k,N}(n)$ for \emph{even} $k$.
\begin{theorem}\label{gffinsymmetrizedrankfn}
Let $N \in \mathbb{N}$. Then for any positive integer $\nu$, we have
\begin{align}
\sum_{n=1}^{\infty}\eta_{2\nu,N}(n)q^n&=(q)_N \sum_{\substack {n=1}}^{N}\frac{(-1)^{n-1}  q^{\frac{n(3n-1)}{2}+\nu n}(1+q^n)  }{(q)_{N+n}(q)_{N-n}(1-q^{n})^{2\nu}} \label{gf_symm_rank2}\\
 &=(q)_N\sum_{\substack {n=-N\\n\neq 0}}^{N}\frac{(-1)^{n-1}  q^{\frac{n(3n+1)}{2}+\nu n}  }{(q)_{N+n}(q)_{N-n}(1-q^n)^{2\nu}}. \label{gfsymrankmoment} 
\end{align}
\end{theorem}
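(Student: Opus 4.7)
My plan is to parallel the approach used by Andrews (for the infinite symmetrized rank moments) and Garvan (for the symmetrized crank moments), inserting the finite cutoff $N$ coming from the set $S_1$.

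First, I would set up the two-variable rank generating function
\[
R_{S_1}(z,q) \;:=\; \sum_{n\ge 1}\sum_{m\in\mathbb Z} N_{S_1}(m,n)\,z^m q^n,
\]
by summing over the Durfee square size $j\in\{1,\dots,N\}$ of $\pi_2$. The partition $\pi_2$ with Durfee square $j$ contributes the classical rank-tracking generating function $q^{j^2}/\bigl[(zq;q)_j(q/z;q)_j\bigr]$, while $\pi_1\in\mathcal D$ (distinct parts in $[N-j+1,N]$, weighted by $(-1)^{\#(\pi_1)}$) contributes $(q^{N-j+1};q)_j=(q)_N/(q)_{N-j}$. Combining these gives
\[
R_{S_1}(z,q) \;=\; (q)_N \sum_{j=1}^{N}\frac{q^{j^2}}{(q)_{N-j}(zq;q)_j(q/z;q)_j}.
\]

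Next, I would convert this sum over Durfee squares into a bilateral sum indexed by $n\in\{-N,\dots,N\}$, using the finite analogue of the Watson/Fine-type transformation that Garvan employs in the infinite case. The target is an identity of the shape
\[
R_{S_1}(z,q) \;=\; (1-z)(q)_N \sum_{\substack{n=-N \\ n\ne 0}}^{N}\frac{(-1)^n z^n q^{n(3n+1)/2}}{(q)_{N+n}(q)_{N-n}(1-zq^n)} \;+\; \text{(boundary term)},
\]
which one can establish either by a partial-fraction expansion of the right-hand side in $z$, combined with matching residues at the poles $z=q^{-n}$, or by an induction on $N$ that reduces to the classical identity as $N\to\infty$.

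Having $R_{S_1}(z,q)$ in the "sum over $n$" form, I would then extract $\sum_n \eta_{2\nu,N}(n)q^n$ by using the representation of the binomial $\binom{m+\nu-1}{2\nu}$ as a Taylor coefficient. Specifically, expanding $\frac{1-z}{1-zq^n}$ in powers of $1-z$ and retaining the $(1-z)^{2\nu}$-coefficient produces the factor $(1-q^n)^{-2\nu}$; simultaneously the factor $z^n$ gets shifted to $q^{\nu n}$, yielding $q^{n(3n+1)/2+\nu n}$. This directly delivers the bilateral form \eqref{gfsymrankmoment}. Finally, pairing $n$ with $-n$ and using $(-q^{-n})^{2\nu}(1-q^n)^{2\nu}=q^{-2\nu n}(1-q^{-n})^{2\nu}$ combines the two contributions into a single term of the form $(-1)^{n-1}(1+q^n)\,q^{n(3n-1)/2+\nu n}/(1-q^n)^{2\nu}$ summed over $n=1,\dots,N$, giving \eqref{gf_symm_rank2}.

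The main obstacle is the second step: pinning down and rigorously justifying the correct finite version of the bilateral transformation that carries the Durfee-square-indexed sum into the $n$-indexed sum with the $\dfrac{(q)_N}{(q)_{N+n}(q)_{N-n}}$ weight. In the infinite limit $N\to\infty$ the identity collapses to Watson's transformation, but the truncated version requires keeping track of boundary contributions from $n=\pm N$. Once this is in hand, the symmetrization and coefficient extraction are mechanical, and the Durfee-square generating function itself is already essentially contained in \cite{DEMS}.
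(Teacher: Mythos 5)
Your proposal follows essentially the same route as the paper: the Durfee-square form of $R_{S_1}(z;q)$ is converted into a bilateral sum over $n$ with weights $(q)_N^2/\bigl((q)_{N+n}(q)_{N-n}\bigr)$, and the binomial coefficient $\binom{m+\nu-1}{2\nu}$ is extracted as the coefficient of $(z-1)^{2\nu}$ in $z^{\nu-1}R_{S_1}(z;q)$, which is exactly what the paper does via $2\nu$-fold differentiation at $z=1$, the Leibniz rule, and a final application of the binomial theorem. The transformation you flag as the main obstacle does not need to be proved: it is a known partial-fraction identity of Andrews \cite[Theorem 2.1]{andpar}, quoted in the paper as \eqref{frgfbil}, and after symmetrizing in $z \leftrightarrow z^{-1}$ it gives the bilateral form $\frac{1}{(q)_N}\sum_{n=-N}^{N}\frac{(-1)^n (q)_N^2 q^{n(3n+1)/2}}{(q)_{N+n}(q)_{N-n}}\cdot\frac{1-z}{1-zq^n}$ (without the extra factor $z^n$ in your sketched target, which is what makes the coefficient extraction collapse cleanly to $q^{\nu n}/(1-q^n)^{2\nu}$).
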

Letting $N \rightarrow \infty$, we obtain the generating functions for the symmetrized rank moment, namely, \eqref{Andrews_genfn_symmetrized_rank} and \eqref{andrews_symm_rank2}. 
Next, we are going to define the finite analogue of  the symmetrized crank moments.
Again, for convenience, we recollect some definitions from \cite[p.~8-9]{DEMS}. 

Let $V_{2}$ denote the set of vector partitions $\mathcal{D}\times\mathcal{P}\times\mathcal{P}$. Denote an element $\vec{\pi}$ of $V_2$ by 
$(\pi_1, \pi_2, \pi_3)$ so that the magnitude of $\vec{\pi}$ is $|\vec{\pi}|=|\pi_1|+|\pi_2|+|\pi_3|$.

For any positive integer $N$, we define the following set:
\begin{align*}{}
S_2:=\{\vec{\pi}\in V_2 : l(\pi_1), l(\pi_2), l(\pi_3) \leq N \}. 
\end{align*}
Define $w_c(\vec{\pi}):=(-1)^{\#(\pi_1)}$ to be the weight of the vector partition $\vec{\pi}=(\pi_1, \pi_2, \pi_3)$ 
and crank$(\vec{\pi}):= \#(\pi_2)-\#(\pi_3)$ be its vector crank. We define
\begin{equation}\label{ms1}
M_{S_2}(m, n) := \sum_{\vec{\pi} \in S_2, |\vec{\pi}|=n \atop \mathrm{crank}(\vec{\pi})=m} w_c(\vec{\pi}).   
\end{equation}
Letting $N \rightarrow \infty$ we see that $S_2$ approaches the whole set $V_2$ and consequently $M_{S_2}(m, n)$ approaches 
$\sum_{\vec{\pi} \in V_2, |\vec{\pi}|=n \atop \mathrm{crank}(\vec{\pi})=m} w_c(\vec{\pi})$, which is the total number of weighted vector partitions of $n$ with vector
crank $m$, a quantity first studied by Garvan (See \cite[p.~50]{garvan88}). By the work of Andrews and Garvan \cite[Theorem 1]{andrewsgarvan88}, 
we know that this equals $M(m, n)$, the number of integer partitions of $n$ with crank $m$. 

We now define a finite analogue of the $k^{th}$ symmetrized crank moment. Let $k, N$ be positive integers. Then for any $n \geq 1$,
\begin{equation}\label{SCM}
\mu_{k,N}(n):=\sum_{m=-n}^{n} \left(\begin{matrix} m+ \lfloor \frac{k-1}{2} \rfloor \\ k\end{matrix}\right)M_{S_2}(m,n).
\end{equation}

\begin{proposition} 
For any odd positive integer $k$, we have $\mu_{k,N}(n)=0$ for all $n\geq1$. 
\end{proposition}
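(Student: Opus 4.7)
The plan is to prove the proposition by combining two observations: a symmetry of $M_{S_2}(m,n)$ in $m$, and an anti-symmetry of the binomial coefficient appearing in the definition \eqref{SCM}. This mirrors the classical argument that odd rank/crank moments vanish.

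First, I would show that $M_{S_2}(m,n) = M_{S_2}(-m,n)$ via a weight-preserving involution on $S_2$. Define $\iota : S_2 \to S_2$ by $\iota(\pi_1, \pi_2, \pi_3) := (\pi_1, \pi_3, \pi_2)$. The set $S_2$ is characterized by $l(\pi_1), l(\pi_2), l(\pi_3) \leq N$, so swapping $\pi_2$ and $\pi_3$ sends $S_2$ to itself. Clearly $|\iota(\vec{\pi})| = |\vec{\pi}|$, and since $\pi_1$ is untouched, the weight $w_c(\vec{\pi}) = (-1)^{\#(\pi_1)}$ is preserved. Finally, the vector crank $\text{crank}(\vec{\pi}) = \#(\pi_2) - \#(\pi_3)$ is negated. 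Hence $\iota$ gives a weight-preserving bijection between vector partitions of $n$ with vector crank $m$ and those with vector crank $-m$, yielding $M_{S_2}(m,n) = M_{S_2}(-m,n)$.

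Next, I would verify that for odd $k = 2\nu+1$, the polynomial
\[
\binom{m+\lfloor(k-1)/2\rfloor}{k} = \binom{m+\nu}{2\nu+1} = \frac{1}{(2\nu+1)!}\prod_{j=-\nu}^{\nu}(m+j)
\]
is an odd function of $m$. Indeed, its roots form the symmetric multiset $\{-\nu, -\nu+1, \ldots, \nu\}$, and the number of linear factors $2\nu+1$ is odd, so replacing $m$ by $-m$ flips the sign.

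Putting these together, in the sum \eqref{SCM} we pair the term at $m$ with the term at $-m$: the binomial factor changes sign while $M_{S_2}(m,n)$ does not, so the terms cancel (and the $m = 0$ term vanishes automatically since $\binom{\nu}{2\nu+1} = 0$ as $\nu < 2\nu+1$). Therefore $\mu_{k,N}(n) = 0$ for all $n \geq 1$. There is no real obstacle here; the only thing to be careful about is verifying that $\iota$ genuinely respects the constraint $l(\pi_2), l(\pi_3) \leq N$, which is immediate from the symmetric form of $S_2$.
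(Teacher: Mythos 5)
Your proof is correct and is essentially the paper's argument made explicit: the paper dismisses this with the one-line remark that the odd crank moments $M_{k,N}(n)$ vanish, which itself rests on the same symmetry $M_{S_2}(m,n)=M_{S_2}(-m,n)$ (noted in \cite{DEMS} via exactly your swap of $\pi_2$ and $\pi_3$) combined with the oddness of the binomial polynomial. You have simply supplied the involution and the pairing directly rather than routing through the vanishing of the odd power moments.
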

This easily follows because all the odd crank moments $M_{k, N}(n)$ are zero. Analogous to Theorem \ref{gffinsymmetrizedrankfn} above, we derive the following result for the generating function of $\mu_{k, N}(n)$ for \emph{even} $k$.

\begin{theorem}\label{theoremmugf}
 Let $N \in \mathbb{N}$. Then for any positive integer $\nu$, one has
\begin{align}
\sum_{n=1}^{\infty}\mu_{2\nu,N}(n)q^n &=(q)_N \sum_ {n=1}^{N}\frac{(-1)^{n-1} q^{\frac{n(n-1)}{2}+\nu n}(1+q^n)}{(q)_{N+n}(q)_{N-n}(1-q^n)^{2\nu}} \label{mugf} \\
 &= (q)_N\sum_{\substack {n=-N\\n\neq 0}}^{N}\frac{(-1)^{n-1}  q^{\frac{n(n+1)}{2}+\nu n}  }{(q)_{N+n}(q)_{N-n}(1-q^n)^{2\nu}} \label{fin_symm_crank_2}
\end{align}
\end{theorem}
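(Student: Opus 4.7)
The plan is to adapt Garvan's strategy for the infinite case \cite{garvan11} to the finite setting: write the two-variable crank generating function, extract symmetrized moments via a substitution $z=1+w$, and evaluate using a partial-fraction decomposition. By separately summing $\pi_1\in\mathcal D$ with weight $(-1)^{\#(\pi_1)}$ (yielding a factor of $(q)_N$) and $\pi_2,\pi_3\in\mathcal P$ with largest parts at most $N$ (where $z$ tracks the crank), I first obtain the two-variable crank generating function
\begin{equation*}
F_N(z;q):=\sum_{m,n}M_{S_2}(m,n)\,z^mq^n=\frac{(q)_N}{(zq;q)_N(z^{-1}q;q)_N}.
\end{equation*}
Then, applying the coefficient identity $\binom{m+\nu-1}{2\nu}=[w^{2\nu}](1+w)^{m+\nu-1}$ to the definition \eqref{SCM} and interchanging sums gives
\begin{equation*}
\sum_{n=1}^{\infty}\mu_{2\nu,N}(n)\,q^n=[w^{2\nu}]\!\left((1+w)^{\nu-1}F_N(1+w;q)\right).
\end{equation*}

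The main computation is a partial-fraction decomposition of $F_N(z;q)$ in $z$. As a rational function of $z$ it has simple poles at $z=q^{\pm n}$ for $1\le n\le N$ and vanishes at infinity; a direct residue calculation using the identity $\prod_{j=1,j\ne n}^{N}(1-q^{j-n})=(-1)^{n-1}q^{-n(n-1)/2}(q)_{n-1}(q)_{N-n}$ (and an analogous calculation at $z=q^n$) produces
\begin{equation*}
F_N(z;q)=(q)_N\sum_{n=1}^{N}A_n\!\left[\frac{1}{1-zq^n}-\frac{1}{1-zq^{-n}}\right],\quad A_n=\frac{(-1)^{n-1}q^{n(n-1)/2}(1-q^n)}{(q)_{N+n}(q)_{N-n}}.
\end{equation*}
Substituting $z=1+w$ and expanding formally in $q$ gives $1/(1-(1+w)q^n)=\sum_{k\ge 0}(1+w)^kq^{nk}$ and $1/(1-(1+w)q^{-n})=-\sum_{k\ge 1}(1+w)^{-k}q^{nk}$, so the bracket becomes $1+\sum_{k\ge 1}\bigl((1+w)^k+(1+w)^{-k}\bigr)q^{nk}$. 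Extracting $[w^{2\nu}](1+w)^{\nu-1}$ annihilates the constant term and, using the reflection identity $\binom{\nu-1-k}{2\nu}=\binom{k+\nu}{2\nu}$, reduces the inner sum to $\sum_{k\ge 1}\bigl[\binom{k+\nu-1}{2\nu}+\binom{k+\nu}{2\nu}\bigr]q^{nk}$, which by standard generating-function identities equals $q^{\nu n}(1+q^n)/(1-q^n)^{2\nu+1}$. Multiplying by $(q)_N A_n$ cancels one factor of $(1-q^n)$ and yields \eqref{mugf}. Formula \eqref{fin_symm_crank_2} then follows by rewriting the sum over $n\in\{1,\ldots,N\}$ as a symmetric sum over $n\in\{-N,\ldots,-1,1,\ldots,N\}$: using $1-q^{-m}=-q^{-m}(1-q^m)$ to simplify the negative-$n$ terms, the $n=-m$ contribution carries $q^{m(m-1)/2+\nu m}$, and combined with the $n=m$ term (carrying $q^{m(m+1)/2+\nu m}$) it reproduces the $(1+q^m)$ numerator of \eqref{mugf}.

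I expect the main obstacle to be the partial-fraction residue computation, which demands careful bookkeeping of signs and negative powers of $q$ arising from factors $(q^k-q^j)$ with $k<j$ and from the transformation $1-q^{-i}=-q^{-i}(1-q^i)$. Once that decomposition is set up cleanly, the subsequent $w$-expansion and binomial identity are routine and parallel Garvan's proof of the generating function \eqref{garvan_symmcrank_1} in the infinite case.
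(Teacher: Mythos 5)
Your proof is correct and follows essentially the same route as the paper: both extract $\sum_n \mu_{2\nu,N}(n)q^n$ from the two-variable generating function $(q)_N/\left((zq)_N(z^{-1}q)_N\right)$ via a partial-fraction decomposition in $z$ followed by binomial extraction of $\binom{m+\nu-1}{2\nu}$, your coefficient-of-$w^{2\nu}$ formalism being just the Taylor-coefficient version of the paper's Leibniz differentiation at $z=1$. The only substantive difference is that you re-derive the partial-fraction expansion by a direct residue computation (which checks out: your residue at $z=q^m$ agrees with $(-1)^{m-1}(q)_N(1-q^m)q^{m(m+1)/2}/\left((q)_{N+m}(q)_{N-m}\right)$), whereas the paper simply quotes Andrews' identity \eqref{andrewsCS2}.
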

One can easily observe that this result is a finite analogue of the equations \eqref{garvan_symmcrank_1} and \eqref{garvan_symmcrank_2} 
by letting $N\rightarrow \infty$.
The next result provides us information about the generating function of the difference between finite analogues of symmetrized crank and rank moments. 
\begin{theorem}\label{difference_gen_fin_rank_crank}
Let $N \in \mathbb{N}$. Then for any positive integer $k$, we have
\begin{equation}\label{gen_fin_rank_crank}
\sum_{n=1}^{\infty}(\mu_{2k,N}(n)-\eta_{2k,N}(n))q^n=\frac{1}{(q)_N}\sum_{N\geq n_k\geq ...\geq n_1\geq1}\frac{(q)_{n_1}q^{n_1+n_2+...+n_k}}{(1-q^{n_1})^2(1-q^{n_2})^2...(1-q^{n_k})^2}.
\end{equation}
\end{theorem}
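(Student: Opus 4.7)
The plan is to prove Theorem \ref{difference_gen_fin_rank_crank} by induction on $k$, by showing both sides of \eqref{gen_fin_rank_crank} satisfy the same recursion. Begin by subtracting the single-sum formulas \eqref{gf_symm_rank2} from \eqref{mugf}. Since $q^{\frac{n(3n-1)}{2}+kn} = q^{\frac{n(n-1)}{2}+kn}\cdot q^{n^2}$, this gives
$$T_k(q,N) := \sum_{n\ge 1}(\mu_{2k,N}(n)-\eta_{2k,N}(n))q^n = (q)_N\sum_{n=1}^N \frac{(-1)^{n-1}(1+q^n)\,q^{n(n-1)/2+kn}(1-q^{n^2})}{(q)_{N+n}(q)_{N-n}(1-q^n)^{2k}}.$$
Let $F_k(q,N)$ denote the right-hand side of \eqref{gen_fin_rank_crank}. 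Peeling off the outermost summation variable $n_{k+1}=M$ in the $(k+1)$-fold multi-sum immediately yields the recursion
$$F_{k+1}(q,N) = \sum_{M=1}^N \frac{(q)_M\,q^M}{(q)_N(1-q^M)^2}\,F_k(q,M).$$

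The heart of the argument is to show $T_k$ satisfies the same recursion. Substitute the single-sum expression for $T_k(q,M)$ into $\sum_{M=1}^N \frac{(q)_M q^M}{(q)_N(1-q^M)^2} T_k(q,M)$ and interchange the orders of summation over $n$ and $M$, so $n$ becomes outer with $1\le n\le N$ and $M$ inner with $n\le M\le N$. The whole expression then collapses to $T_{k+1}(q,N)$ provided one establishes the auxiliary identity
$$\sum_{M=n}^N \frac{(q)_M^2\,q^M}{(1-q^M)^2\,(q)_{M+n}(q)_{M-n}} = \frac{(q)_N^2\,q^n}{(q)_{N+n}(q)_{N-n}(1-q^n)^2}. \qquad (\star)$$
I plan to prove $(\star)$ by telescoping: set $b_M := \dfrac{(q)_{M-1}^2\,q^n}{(q)_{M+n-1}(q)_{M-n-1}(1-q^n)^2}$ for $M\ge n+1$, with $b_n := 0$. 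A direct computation shows the summand of $(\star)$ equals $b_{M+1}-b_M$ after reducing via the elementary factorization
$$(1-q^M)^2 - (1-q^{M+n})(1-q^{M-n}) = q^{M-n}(1-q^n)^2.$$
Hence $(\star)$ collapses to $b_{N+1}$, which is exactly its right-hand side.

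For the base case $k=1$, Theorem \ref{finite analogue_Andrews_spt} together with the equalities $\mu_{2,N}(n)=\tfrac12 M_{2,N}(n)$ and $\eta_{2,N}(n)=\tfrac12 N_{2,N}(n)$ (which follow from the vanishing of the first moments) yields $T_1(q,N)=\sum_n \textup{spt}(n,N)\,q^n$. On the other hand,
$$F_1(q,N) = \frac{1}{(q)_N}\sum_{n=1}^N \frac{(q)_n\,q^n}{(1-q^n)^2} = \sum_{n=1}^N \frac{q^n}{(1-q^n)^2\,(q^{n+1};q)_{N-n}},$$
which is the natural finite analogue of Andrews' generating function for $\textup{spt}(n)$: choose a smallest part $n\le N$, its multiplicity (contributing $q^n/(1-q^n)^2$), and a partition of the remainder with parts in $[n+1,N]$. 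Hence $T_1=F_1$. The main obstacle is pinpointing the telescoping function $b_M$ in $(\star)$; once this is in hand, both the inductive step and the matching of the base case to the finite $\textup{spt}$-generating function are routine algebraic verifications.
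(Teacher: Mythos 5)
Your proof is correct, but it takes a genuinely different route from the paper. The paper derives \eqref{gen_fin_rank_crank} from Bailey-pair machinery: Proposition \ref{generalBaileypair} is established by iterating Bailey's lemma and letting $\rho_1,\rho_2\to 1$ via \eqref{limitresultp1p2}, after which the unit Bailey pair $\beta_n=\delta_{n,0}$ yields the multi-sum form \eqref{mugfalternate} of the crank generating function, and the pair $\beta_n=1/(q)_n$ yields Corollary \ref{crudediffsymmoments}; combining these with \eqref{gfsymrankmoment} gives the theorem. You instead subtract \eqref{gf_symm_rank2} from \eqref{mugf} directly, observe that both sides of \eqref{gen_fin_rank_crank} satisfy the recursion obtained by peeling off $n_{k+1}$, and reduce the inductive step to the single elementary identity $(\star)$, which you prove by telescoping with the correct antidifference $b_M$ (the key factorization $(1-q^M)^2-(1-q^{M+n})(1-q^{M-n})=q^{M-n}(1-q^n)^2$ checks out, as does $b_{n+1}-b_n=b_{n+1}$ matching the $M=n$ summand). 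I verified that applying $(\star)$ after interchanging the $n$ and $M$ sums produces exactly the $T_{k+1}$ summand, with the exponent shifting from $kn$ to $(k+1)n$ and $(1-q^n)^{2k}$ to $(1-q^n)^{2k+2}$. Your $(\star)$ is in effect a finite, hands-on instance of one step of the Bailey chain, so your argument is more elementary and self-contained: it needs neither Bailey's lemma nor the limit computation \eqref{limitresultp1p2}. What it gives up is the byproduct \eqref{mugfalternate} (the finite analogue of Garvan's \eqref{mugf_alternate}), which the paper's route obtains for free and records as a corollary of independent interest. One small remark on your base case: it leans on Theorem \ref{finite analogue_Andrews_spt} (quoted from \cite{DEMS}) plus the combinatorial identification $F_1(q,N)=\sum_{n\ge 1}\textup{spt}(n,N)q^n$ via $(q)_{n}/(q)_N=1/(q^{n+1};q)_{N-n}$; this is legitimate and non-circular here, though if you wanted the proof fully algebraic you could instead verify $T_1=F_1$ by one more application of the same telescoping identity.
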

This is a finite analogue of Garvan's result \eqref{difference_gen_garvan} for the generating function of the difference between symmetrized crank and rank moments.
\begin{remark}\label{remark_fin_higher_spt}
If we substitute $k=1$ in the above result, then we can obtain Theorem \textup{\ref{finite analogue_Andrews_spt}}. Thus we have $\mu_{2,N}(n)-\eta_{2,N}(n)=\textup{spt}(n,N)$. This suggests us to define a finite analogue of higher order spt-function as 
$\textup{spt}_k(n,N):= \mu_{2k,N}(n)-\eta_{2k,N}(n)$. 
\end{remark}
The remainder of this paper is organized as follows. In the next section we collect all necessary results which will be useful throughout the paper. The generating functions of the finite analogues of the symmetrized rank and crank moments are proved in Section \ref{main_theorems}. In Section \ref{baily_pairs_proof_conj}, we derive important results using Bailey's lemma and give a proof of Conjecture \ref{finiteanalogcrankrankconjecture}. 
We conclude the paper, by discussing further questions in Section 6.  
\section{Preliminaries}
In \cite[Theorem 2.2]{DEMS}, Dixit et al. noted that the generating function of $N_{S_1}(m, n)$ is 
\begin{align}\label{RS1zqdefinition}
R_{S_1}(z;q):=\sum_{n=1}^{\infty}\sum_{m=-\infty}^{\infty}N_{S_1}(m,n)z^mq^n=\sum_{j=1}^N \left[\begin{matrix} N \\ j \end{matrix}\right] \frac{q^{j^2} (q)_j }{(z q)_j (z^{-1} q)_j}.
\end{align}
We call \eqref{RS1zqdefinition} as the finite analogue of the rank generating function, for, letting $N \rightarrow \infty$ on  both sides, gives the well-known result for the rank generating function (for more details, see \cite[p. 8]{DEMS}), 
\begin{equation*}
\sum_{n=1}^{\infty}  \sum_{m=-\infty}^{\infty} N\left(m, n\right) z^m q^n =  \sum_{j=1}^\infty  \frac{q^{j^2}  }{(z q)_j (z^{-1} q)_j}.
\end{equation*}
Again, in \cite[p.~252, Theorem 2.1]{andpar}, \cite[Equation (12.2.2), p.~263]{abramlostI}, Andrews showed that 
\begin{align}\label{frgfbil}
\sum_{n=0}^{N}\left[\begin{matrix} N \\ n \end{matrix}\right]\frac{(q)_nq^{n^2}}{(zq)_n(z^{-1}q)_n}=\frac{1}{(q)_N}+(1-z)\sum_{n=1}^{N}\left[\begin{matrix} N \\ n \end{matrix}\right]\frac{(-1)^n(q)_nq^{n(3n+1)/2}}{(q)_{N+n}}\left(\frac{1}{1-zq^n}-\frac{1}{z-q^n}\right).
\end{align}
Now we recall the crank generating function, that is,
\begin{equation}\label{crank_gen_func}
\frac{(q)_{\infty}}{(zq)_\infty(z^{-1}q)_\infty} = \sum_{n=0}^{\infty} \sum_{m=-\infty}^{\infty} M(m, n) z^m q^n,
\end{equation}
where $M(m,n)$ is the number of partitions of $n$ with crank $m$. 
In \cite[Theorem 2.3]{DEMS}, Dixit et al. proved that the generating function of $M_{S_2}(m,n)$ is 
\begin{equation}\label{fincrankgf}
 C_{S_2}(z;q) := \sum_{n=0}^{\infty}\sum_{m=-\infty}^{\infty}M_{S_2}(m,n)z^mq^n= \frac{(q)_N}{(zq)_N (z^{-1}q)_N},
\end{equation}
which is the finite analogue of \eqref{crank_gen_func}.
Andrews \cite[p.~258, Theorem 4.1]{andpar} showed that
\begin{equation}\label{andrewsCS2}
\frac{(q)_{N}}{(zq)_{N}(z^{-1}q)_{N}}=\frac{1}{(q)_N}+(1-z)\sum_{n=1}^{N}\left[\begin{matrix} N \\ n \end{matrix}\right]\frac{(-1)^n(q)_nq^{n(n+1)/2}}{(q)_{n+N}}\left(\frac{1}{1-zq^n}-\frac{1}{z-q^n}\right).
\end{equation}
Now we collect some useful facts about Bailey pairs, see \cite[p. 582]{andrewsaskey99}.
A pair of sequences $(\alpha_n(a, q), \beta_n(a, q))$ is called a Bailey pair with parameters $(a, q)$ if, for each non-negative integer $n$,
\begin{equation}\label{Baileydefn}
\beta_n(a, q) = \sum_{r=0}^{n}\frac{\alpha_r(a, q)}{(q;q)_{n-r} (aq;q)_{n+r}}.
\end{equation}
\begin{theorem}[Bailey's Lemma]\label{Baileylemma}
Suppose $(\alpha_n(a, q), \beta_n(a, q))$ is a Bailey pair with parameters $(a, q)$. Then $(\alpha'_n(a, q), \beta^{'}_n(a, q))$ is another Bailey pair 
with parameters $(a, q)$, where
$$
\alpha'_n(a, q)=\frac{(\rho_1, \rho_2;q)_n}{(aq/\rho_1, aq/\rho_2;q)_n}\left(\frac{aq}{\rho_1\rho_2} \right)^n \alpha_n(a, q)
$$
and
$$
\beta^{'}_n(a, q)=\sum_{k=0}^{n}\frac{(\rho_1, \rho_2;q)_k (aq/\rho_1\rho_2;q)_{n-k}}{(aq/\rho_1, aq/\rho_2;q)_n (q;q)_{n-k}}\left(\frac{aq}{\rho_1 \rho_2}\right)^k\beta_k(a, q).
$$
\end{theorem}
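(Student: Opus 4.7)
The plan is to verify directly that the constructed pair $(\alpha'_n,\beta'_n)$ satisfies the defining Bailey relation \eqref{Baileydefn}, i.e.\ that
\begin{equation*}
\beta'_n(a,q) = \sum_{r=0}^{n} \frac{\alpha'_r(a,q)}{(q;q)_{n-r}(aq;q)_{n+r}}.
\end{equation*}
Using the hypothesis $\beta_k = \sum_{r=0}^{k}\alpha_r/\bigl((q;q)_{k-r}(aq;q)_{k+r}\bigr)$, I would expand the sum defining $\beta'_n$ as a double sum and interchange the order of summation so that $r$ runs from $0$ to $n$ and $k$ from $r$ to $n$. Isolating the coefficient of $\alpha_r$ on each side reduces the claim, for each fixed $r\le n$, to the identity
\begin{equation*}
\sum_{k=r}^{n}\frac{(\rho_1,\rho_2;q)_k\,(aq/(\rho_1\rho_2);q)_{n-k}}{(q;q)_{n-k}\,(q;q)_{k-r}\,(aq;q)_{k+r}}\left(\frac{aq}{\rho_1\rho_2}\right)^{k} = \frac{(\rho_1,\rho_2;q)_r\,(aq/\rho_1,aq/\rho_2;q)_n}{(aq/\rho_1,aq/\rho_2;q)_r\,(q;q)_{n-r}\,(aq;q)_{n+r}}\left(\frac{aq}{\rho_1\rho_2}\right)^{r}.
\end{equation*}

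Next, I would shift the summation index via $k = r+j$ and rewrite each shifted Pochhammer symbol using $(x;q)_{r+j} = (x;q)_r(xq^r;q)_j$, together with the reflection $(x;q)_{n-r-j} = (x;q)_{n-r}(-1)^{j}q^{\binom{j}{2}}(q^{-(n-r)}/x;q)_j/x^{j}$. All $j$-independent factors then pull out of the sum, leaving a terminating series in $j$ of length $n-r$ which, after this rearrangement, takes the standard balanced $_3\phi_2$ form. A judicious identification of the Pfaff--Saalsch\"utz parameters $A, B, C$ with $\rho_1, \rho_2$ and a power of $aq$ allows me to invoke the $q$-Pfaff--Saalsch\"utz summation
\begin{equation*}
\sum_{j=0}^{m}\frac{(A,B,q^{-m};q)_j}{(q,C,ABq^{1-m}/C;q)_j}\,q^{j} = \frac{(C/A,C/B;q)_m}{(C,C/(AB);q)_m},
\end{equation*}
with $m = n-r$, collapsing the inner sum to a product of Pochhammer symbols.

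Finally, I would match this product with the right-hand side of the reduced identity by repeated use of the quotient rule $(x;q)_n/(x;q)_r = (xq^{r};q)_{n-r}$ and the usual reflection for negatively indexed $q$-shifted factorials. The main obstacle is purely computational: one must choose the correspondence $(A,B,C)\leftrightarrow(\rho_1,\rho_2,aq)$ so that the balance condition $ABq^{1-m}/C$ lines up precisely with the factor $(aq/(\rho_1\rho_2);q)_{n-k}$ appearing in the sum, and then track a routine but delicate cancellation among the shifted and reversed Pochhammers. Once this parameter matching is fixed, no further input beyond Pfaff--Saalsch\"utz and elementary $q$-shift identities is needed to conclude that $(\alpha'_n,\beta'_n)$ is indeed a Bailey pair with parameters $(a,q)$.
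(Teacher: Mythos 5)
The paper does not actually prove this statement: Bailey's Lemma is quoted as a classical result with a pointer to Andrews--Askey--Roy and is used as a black box, so there is no internal proof to compare against. Your argument is the standard one from the literature, and its architecture is sound: substituting the Bailey relation for $\beta_k$ into the definition of $\beta'_n$, interchanging the order of summation, and reducing the claim to a single terminating identity for the coefficient of each $\alpha_r$ is exactly right, and the reduced identity you display is correct. The inner sum does collapse under $q$-Pfaff--Saalsch\"utz, so the strategy carries through to a complete proof.

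Two computational details need repair before the sketch closes. First, the reflection formula you quote is wrong as written: the correct identity is
\begin{equation*}
(x;q)_{m-j}=\frac{(x;q)_m}{(q^{1-m}/x;q)_j}\left(-\frac{q}{x}\right)^{j}q^{\binom{j}{2}-mj},
\end{equation*}
with the reversed Pochhammer in the \emph{denominator} and the extra factor $q^{j-mj}$. In practice one should apply it to the ratio $(aq/\rho_1\rho_2;q)_{m-j}/(q;q)_{m-j}$ with $m=n-r$, where the signs and the $q^{\binom{j}{2}-mj}$ factors cancel, leaving $\frac{(aq/\rho_1\rho_2;q)_m}{(q;q)_m}\cdot\frac{(q^{-m};q)_j}{(\rho_1\rho_2q^{-m}/a;q)_j}\bigl(\rho_1\rho_2/a\bigr)^{j}$. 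Second, the parameter identification is not $(A,B,C)=(\rho_1,\rho_2,aq)$ but $(A,B,C)=(\rho_1q^{r},\rho_2q^{r},aq^{2r+1})$, coming from $(\rho_i;q)_{r+j}=(\rho_i;q)_r(\rho_iq^{r};q)_j$ and $(aq;q)_{2r+j}=(aq;q)_{2r}(aq^{2r+1};q)_j$. One then checks $ABq^{1-m}/C=\rho_1\rho_2q^{-m}/a$, so the series is balanced, and Pfaff--Saalsch\"utz yields $(aq^{r+1}/\rho_1,aq^{r+1}/\rho_2;q)_{n-r}/\bigl((q;q)_{n-r}(aq^{2r+1};q)_{n-r}\bigr)$ after the prefactor $(aq/\rho_1\rho_2;q)_m$ cancels; together with $(aq;q)_{2r}(aq^{2r+1};q)_{n-r}=(aq;q)_{n+r}$ this assembles into exactly the required coefficient of $\alpha_r$. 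These are corrections of bookkeeping, not of the idea.
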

We also require the following result:
\begin{equation}\label{limitresultp1p2}
\lim_{\rho_2 \rightarrow 1} \lim_{\rho_1 \rightarrow 1}\frac{1}{(1-\rho_1)(1-\rho_2)}\left( 1-\frac{(q)_k (q/\rho_1 \rho_2)_k}{(q/\rho_1)_k(q/\rho_2)_k}\right)=\sum_{j=1}^{k}\frac{q^j}{(1-q^j)^2}.
\end{equation}

\section{Proofs of Theorem \ref{gffinsymmetrizedrankfn} and Theorem \ref{theoremmugf} }\label{main_theorems}
\begin{proof}[Theorem \textup{\ref{gffinsymmetrizedrankfn}}][]
By definition \eqref{FR} of $\eta_{k, N}(n)$, we know that 
\begin{equation*}
\eta_{2\nu,N}(n):=\sum_{m=-n}^{n} \left(\begin{matrix} m+ \nu-1\\ 2\nu\end{matrix}\right)N_{S_1}(m,n).
\end{equation*}
From the definition \eqref{RS1zqdefinition} of $R_{S_1}(z;q)$, it follows at once that
\begin{equation*}
\left(\frac{d^{2\nu}}{dz^{2\nu}}z^{\nu-1}R_{S_1}(z;q)\right)\Big|_{z=1} = \sum_{n=1}^{\infty}\sum_{m=-\infty}^{\infty}(m+\nu-1)(m+\nu-2)\cdots(m-\nu+1)(m-\nu)N_{S_1}(m,n)q^n.
\end{equation*}
In other words, 
\begin{equation*}
\left(\frac{d^{2\nu}}{dz^{2\nu}}z^{\nu-1}R_{S_1}(z;q)\right)\Big|_{z=1} = (2\nu)!\sum_{n=1}^{\infty}\eta_{2\nu,N}(n)q^n.
\end{equation*}
Using Leibniz's chain rule, we get
\begin{align}\label{LBR}
\sum_{n=1}^{\infty}\eta_{2\nu,N}(n)q^n=\frac{1}{(2\nu)!}\sum_{j=0}^{\nu-1}\left(\begin{matrix} 2\nu\\ j\end{matrix}\right)(\nu-1)(\nu-2)...(\nu-j)R_{S_1}^{(2\nu-j)}(1;q).
\end{align}
It will be sufficient for us to find the derivatives of $R_{S_1}(z;q)$ with respect to $z$. To this end, we wish to write $R_{S_1}(z;q)$ in a suitable form. 
Using \eqref{frgfbil} in the right-most expression of \eqref{RS1zqdefinition}, we deduce that
\begin{align*}
R_{S_1}(z;q)&= \frac{1}{(q)_N} - 1 + (1-z)\sum_{n=1}^{N}\left[\begin{matrix} N \\ n \end{matrix}\right]\frac{(-1)^n(q)_nq^{n(3n+1)/2}}{(q)_{N+n}}\left(\frac{1}{1-zq^n}-\frac{1}{z-q^n}\right) \\ 
&= -1 + \frac{1}{(q)_N}\left(1+\sum_{n=1}^{N}\frac{(-1)^n (q)^2_N q^{\frac{n(3n+1)}{2}}}{(q)_{N+n}(q)_{N-n}}\left( \frac{1-z}{1-zq^n}+\frac{1-z^{-1}}{1-z^{-1}q^n}\right)\right).
\end{align*}
Splitting the summation in the right hand side above, we get
\begin{align*}%
1+R_{S_1}(z;q)=\frac{1}{(q)_N}\left(1+\sum_{n=1}^{N}\frac{(-1)^n (q)^2_N q^{\frac{n(3n+1)}{2}}}{(q)_{N+n}(q)_{N-n}} \left(\frac{1-z}{1-zq^n}\right)+\sum_{n=1}^{N}\frac{(-1)^n (q)^2_N q^{\frac{n(3n+1)}{2}} }{(q)_{N+n}(q)_{N-n}} \left(\frac{1-z^{-1}}{1-z^{-1}q^n}\right)\right).
\end{align*} 
Making a change of variable from $n$ to $-n$ in the rightmost summation above, we arrive at
\begin{align*}
1+R_{S_1}(z;q)=\frac{1}{(q)_N}\sum_{n=-N}^{N}\frac{(-1)^n (q)^2_N q^{\frac{n(3n+1)}{2}}}{(q)_{N+n}(q)_{N-n}} \left(\frac{1-z}{1-zq^n}\right).
\end{align*}
We now take the derivatives of $1+R_{S_1}(z;q)$ with respect to $z$. Firstly, we obtain
\begin{align*}{}
R'_{S_1}(z;q)=\frac{-1}{(q)_N}\sum_{\substack {n=-N\\n\neq 0}}^{N}\frac{(-1)^n (q)^2_N q^{\frac{n(3n+1)}{2}}  }{(q)_{N+n}(q)_{N-n}} \left(\frac{1-q^n}{(1-zq^n)^2}\right)
\end{align*}
and so for $j\geq1$,
\begin{align}{\label{RS1}}
R^{(j)}_{S_1}(z;q)=\frac{-j!}{(q)_N}\sum_{\substack {n=-N\\n\neq 0}}^{N}\frac{(-1)^n (q)^2_N q^{\frac{n(3n-1)}{2}+jn}  }{(q)_{N+n}(q)_{N-n}} \left(\frac{1-q^n}{(1-zq^n)^{j+1}}\right).
\end{align}
Putting \eqref{RS1} in the right hand side of \eqref{LBR}, we have
\begin{align*}
\sum_{n=1}^{\infty}\eta_{2\nu,N}(n)q^n &=(q)_N\sum_{j=0}^{\nu-1}\left(\begin{matrix} \nu-1\\ j\end{matrix}\right)\sum_{\substack {n=-N\\n\neq 0}}^{N}\frac{(-1)^{n-1}  q^{\frac{n(3n-1)}{2}+(2\nu-j)n}  }{(q)_{N+n}(q)_{N-n}} \left(\frac{1-q^n}{(1-q^n)^{2\nu-j+1}}\right)\\
&=(q)_N\sum_{\substack {n=-N\\n\neq 0}}^{N}\frac{(-1)^{n-1}  q^{\frac{n(3n-1)}{2}+2\nu n}  }{(q)_{N+n}(q)_{N-n}(1-q^n)^{2\nu}} \sum_{j=0}^{\nu-1}\left(\begin{matrix} \nu-1\\ j\end{matrix}\right)\frac{q^{-nj}}{(1-q^n)^{-j}}\\
&=(q)_N\sum_{\substack {n=-N\\n\neq 0}}^{N}\frac{(-1)^{n-1}  q^{\frac{n(3n+1)}{2}+\nu n}  }{(q)_{N+n}(q)_{N-n}(1-q^n)^{2\nu}},
\end{align*}
by an application of binomial theorem to the inner sum in the second step. Therefore,
\begin{equation*}
\sum_{n=1}^{\infty}\eta_{2\nu,N}(n)q^n=(q)_N\sum_{\substack {n=-N\\n\neq 0}}^{N}\frac{(-1)^{n-1}  q^{\frac{n(3n+1)}{2}+\nu n}  }{(q)_{N+n}(q)_{N-n}(1-q^n)^{2\nu}}.
\end{equation*}
We split the sum on the right side into two parts, namely, from $1$ to $N$ and from $-N$ to $-1$.  
\begin{align*}{}
\sum_{n=1}^{\infty}\eta_{2\nu,N}(n)q^n=(q)_N \left(\sum_{\substack {n=1}}^{N}\frac{(-1)^{n-1}  q^{\frac{n(3n+1)}{2}+\nu n}  }{(q)_{N+n}(q)_{N-n}(1-q^n)^{2\nu}}+\sum_{\substack {n=-1}}^{-N}\frac{(-1)^{n-1}  q^{\frac{n(3n+1)}{2}+\nu n}  }{(q)_{N+n}(q)_{N-n}(1-q^n)^{2\nu}}\right).
\end{align*}
Replace $n$ by $-n$ in the rightmost sum to get
\begin{align*}
\sum_{n=1}^{\infty}\eta_{2\nu,N}(n)q^n=&(q)_N \sum_{\substack {n=1}}^{N}\frac{(-1)^{n-1}  q^{\frac{n(3n-1)}{2}+\nu n}(1+q^n)  }{(q)_{N+n}(q)_{N-n}(1-q^{n})^{2\nu}},
\end{align*}
which is nothing but \eqref{gf_symm_rank2}.
\end{proof}

\begin{proof}[Theorem \textup{\ref{theoremmugf}}][]
We know from \eqref{SCM} that
\begin{equation*}
 \mu_{2\nu,N}(n):=\sum_{m=-n}^{n} \left(\begin{matrix} m+ \nu-1 \\ 2\nu \end{matrix}\right)M_{S_2}(m,n).
\end{equation*}
It follows, from the definition \eqref{fincrankgf} of $C_{S_2}(z;q)$ and by an application of Leibniz's rule, that
\begin{equation}\label{gfmuintermsofCS2}
\sum_{n=1}^{\infty}\mu_{2\nu,N}(n)q^n=\frac{1}{(2\nu)!}\sum_{j=0}^{\nu-1}\left(\begin{matrix} 2\nu\\ j\end{matrix}\right)(\nu-1)(\nu-2)...(\nu-j)C_{S_2}^{(2\nu-j)}(1;q).
\end{equation}
Using \eqref{andrewsCS2} in \eqref{fincrankgf}, we get
\begin{align*}
C_{S_2}(z;q)&=\frac{1}{(q)_N}+(1-z)\sum_{n=1}^{N}\left[\begin{matrix} N \\ n \end{matrix}\right]\frac{(-1)^n(q)_nq^{n(n+1)/2}}{(q)_{n+N}}\left(\frac{1}{1-zq^n}-\frac{1}{z-q^n}\right)\\
&=\frac{1}{(q)_N}\left(1+\sum_{n=1}^{N}\frac{(-1)^n (q)^2_N q^{\frac{n(n+1)}{2}}}{(q)_{N+n}(q)_{N-n}}\left( \frac{1-z}{1-zq^n}+\frac{1-z^{-1}}{1-z^{-1}q^n}\right)\right).
\end{align*}
Making a change of variable as in Theorem \ref{gffinsymmetrizedrankfn}, we finally get
\begin{equation*}
 C_{S_2}(z;q)=\frac{1}{(q)_N}\sum_{n=-N}^{N}\frac{(-1)^n (q)^2_N q^{\frac{n(n+1)}{2}}}{(q)_{N+n}(q)_{N-n}} \left(\frac{1-z}{1-zq^n}\right).
\end{equation*}
Hence, for $j \geq 1$, we have
\begin{align*}\label{crankgfderivatives}
C^{(j)}_{S_2}(z;q)=\frac{-j!}{(q)_N}\sum_{\substack {n=-N\\n\neq 0}}^{N}\frac{(-1)^n (q)^2_N q^{\frac{n(n-1)}{2}+jn}  }{(q)_{N+n}(q)_{N-n}} \left(\frac{1-q^n}{(1-zq^n)^{j+1}}\right).
\end{align*}
Substituting these derivative expressions in \eqref{gfmuintermsofCS2} and then by an application of binomial theorem, we obtain
\begin{equation*}
\sum_{n=1}^{\infty}\mu_{2\nu,N}(n)q^n=(q)_N\sum_{\substack {n=-N\\n\neq 0}}^{N}\frac{(-1)^{n-1}  q^{\frac{n(n+1)}{2}+\nu n}  }{(q)_{N+n}(q)_{N-n}(1-q^n)^{2\nu}}.
\end{equation*}
Splitting the sum into the ranges $1$ to $N$ and $-N$ to $-1$ and then making a variable change, we get
\begin{align*}{}
\sum_{n=1}^{\infty}\mu_{2\nu,N}(n)q^n=(q)_N \sum_{\substack {n=1}}^{N}\frac{(-1)^{n-1}  q^{\frac{n(n-1)}{2}+\nu n}(1+q^n)  }{(q)_{N+n}(q)_{N-n}(1-q^{n})^{2\nu}}.
\end{align*}
\end{proof}

\section{Proof of Theorem \ref{difference_gen_fin_rank_crank} and Conjecture \ref{finiteanalogcrankrankconjecture}}\label{baily_pairs_proof_conj}

Using Bailey's lemma, i.e., Theorem \ref{Baileylemma},  we give a result which is essential for the proof of Conjecture  \ref{finiteanalogcrankrankconjecture}. 
\begin{proposition}\label{generalBaileypair}
Let $(\alpha_n(a, q), \beta_n(a, q))$ be a Bailey pair with $a=1$ and $\alpha_0=\beta_0=1$. We then have
\begin{align*}
\sum_{N\geq n_k\geq ...\geq n_1\geq1} \frac{(q)^2_{n_1}q^{n_1+...+n_k} \beta_{n_1}}{(1-q^{n_k})^2 (1-q^{n_{k-1}})^2...(1-q^{n_1})^2}&=\sum_{N\geq n_k\geq ...\geq n_1\geq1}\frac{q^{n_1+...+n_k} }{(1-q^{n_k})^2 (1-q^{n_{k-1}})^2...(1-q^{n_1})^2} \nonumber \\ 
&+\sum_{r=1}^{N}\frac{(q)^2_N}{(q)_{N-r}(q)_{N+r}}\frac{q^{kr}\alpha_r}{(1-q^r)^{2k}}.
\end{align*}

\end{proposition}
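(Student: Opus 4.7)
The plan is to prove the identity by induction on $k$: the base case $k=1$ follows from a single application of Bailey's lemma combined with the limit identity \eqref{limitresultp1p2}, and the inductive step is a routine unrolling of the outermost summation paired with a reapplication of the $k=1$ identity.

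For the base case, I would apply Theorem \ref{Baileylemma} to the given pair $(\alpha_n, \beta_n)$ (with $a=1$) and compare the two forms of $\beta'_N$: the explicit formula $\beta'_N = \sum_{n=0}^{N} c_n(\rho_1, \rho_2)\beta_n$ produced by Bailey's lemma, and the Bailey-pair relation $\beta'_N = \sum_{r=0}^{N}\alpha'_r/[(q)_{N-r}(q)_{N+r}]$. The next step is to subtract the ``corner'' contribution from each side (the $n=0$ term on the first side and the $r=0$ term, equal to $\alpha'_0/(q)_N^2 = 1/(q)_N^2$, on the second), divide through by $(1-\rho_1)(1-\rho_2)$, and let $\rho_1, \rho_2 \to 1$. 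On the $\alpha'$-side the leading factor $(1-\rho_i)$ inside $(\rho_i;q)_r$ cancels the normalization for each $r \geq 1$, producing $\sum_{r=1}^{N} q^r\alpha_r/[(1-q^r)^2(q)_{N-r}(q)_{N+r}]$. On the $\beta'$-side, the quantity $c_0(q)_N^2 = (q)_N(q/\rho_1\rho_2)_N/[(q/\rho_1)_N(q/\rho_2)_N]$ fits the shape of \eqref{limitresultp1p2} with $k=N$, so its residual contributes $-\sum_{j=1}^{N}q^j/(1-q^j)^2$ (once multiplied back by $(q)_N^2$), while for $n \geq 1$ the limit of $c_n/[(1-\rho_1)(1-\rho_2)]$ equals $(q)_{n-1}^2 q^n/(q)_N^2$. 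Multiplying the resulting equation through by $(q)_N^2$ and using $(q)_n^2/(1-q^n)^2 = (q)_{n-1}^2$ yields the proposition at $k=1$.

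For the inductive step $k \to k+1$, I would peel off the outermost summation $n_{k+1}=m$ and write
\begin{equation*}
\mathrm{LHS}_{k+1}(N) = \sum_{m=1}^{N} \frac{q^m}{(1-q^m)^2}\,\mathrm{LHS}_k(m),
\end{equation*}
then substitute the inductive hypothesis. The first nested sum of the hypothesis combines with the outer factor to produce exactly the first nested sum of $\mathrm{RHS}_{k+1}(N)$. For the remaining piece, swapping the order of summation gives $\sum_{r=1}^{N} q^{kr}\alpha_r(1-q^r)^{-2k}\cdot T(r,N)$, where
\begin{equation*}
T(r,N) := \sum_{m=r}^{N}\frac{q^m (q)_{m-1}^2}{(q)_{m-r}(q)_{m+r}}.
\end{equation*}
The key observation is that $T(r,N)$ is precisely what the $k=1$ identity evaluates when specialized to the Bailey pair $\alpha_s = \delta_{r,s}$; invoking the base case yields $T(r,N) = (q)_N^2 q^r/[(q)_{N-r}(q)_{N+r}(1-q^r)^2]$, which assembles cleanly into the second sum of $\mathrm{RHS}_{k+1}(N)$.

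The main obstacle is executing the double limit $\rho_1, \rho_2 \to 1$ in the base case with bookkeeping precision: one must track how the normalization $(1-\rho_1)(1-\rho_2)$ cancels the $(1-\rho_i)$ poles in $(\rho_i;q)_r$ on the $\alpha'$-side, while on the $\beta'$-side the same normalization is consumed by \eqref{limitresultp1p2}. Everything else---the subtraction of corner terms, the order-swap in the induction, and the verification that $T(r,N)$ is a specialization of the $k=1$ identity---is straightforward bookkeeping once the base case is in place.
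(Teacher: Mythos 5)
Your proof is correct, and while your base case coincides with the paper's (both compare the two expressions for $\beta'_N$ coming from Bailey's Lemma, strip the $r=0$ and $n=0$ corner terms, normalize by $(1-\rho_1)(1-\rho_2)$, and invoke \eqref{limitresultp1p2}), your inductive step is genuinely different. The paper iterates the Bailey chain: it observes that the induction hypothesis, being valid for every Bailey pair with $a=1$ and $\alpha_0=\beta_0=1$, applies to the primed pair $(\alpha'_n,\beta'_n)$, and then re-runs the entire substitution-and-double-limit computation at each level of the induction. You instead peel off the outermost index $n_{k+1}=m$, apply the hypothesis to the \emph{original} pair at cutoff $m$, interchange the order of summation, and reduce everything to the single $q$-series identity $T(r,N)=\sum_{m=r}^{N}q^m(q)_{m-1}^2/[(q)_{m-r}(q)_{m+r}]=(q)_N^2\,q^r/[(q)_{N-r}(q)_{N+r}(1-q^r)^2]$, which you obtain by specializing the $k=1$ case to the Bailey pair with $\alpha_s=\delta_{r,s}$ for $s\geq 1$ (a legitimate pair under the stated normalization, and the identity can also be checked by a direct telescoping induction on $N$). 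I verified that $T(r,N)$ combines with $q^{kr}\alpha_r/(1-q^r)^{2k}$ to give exactly the $(k+1)$-level tail, so the argument closes. The trade-off: the paper's route is the standard, conceptually uniform Bailey-chain mechanism (one application of the lemma per level of nesting), whereas yours confines the analytic limit argument to the base case and makes the inductive step purely elementary bookkeeping, at the cost of having to identify and evaluate $T(r,N)$ separately. Both are complete proofs.
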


\begin{proof}
Since $(\alpha_n(a, q), \beta_n(a, q))$ form a Bailey pair with $a=1$, we have the relation   
$$\beta_n(1, q) = \sum_{r=0}^{n}\frac{\alpha_r(1, q)}{(q)_{n-r} (q)_{n+r}}.$$
By Bailey's Lemma, $(\alpha'_n(a, q), \beta^{'}_n(a, q))$ is also a Bailey pair with parameters $(1, q)$. Hence, by \eqref{Baileydefn},
$$
\beta^{'}_n(1, q) = \sum_{r=0}^{n}\frac{\alpha'_r(1, q)}{(q)_{n-r} (q)_{n+r}}.
$$
Substituting the values of $\alpha'_n(a, q)$ and $\beta^{'}_n(a, q)$ from Theorem \ref{Baileylemma}, we get
$$
\sum_{k=0}^{n}\frac{(\rho_1)_k (\rho_2)_k (q/\rho_1\rho_2)_{n-k}}{(q/\rho_1)_n (q/\rho_2)_n (q)_{n-k}}\left(\frac{q}{\rho_1 \rho_2}\right)^k\beta_k(1, q)
=\sum_{k=0}^{n}   \frac{(\rho_1)_k (\rho_2)_k}{(q/\rho_1)_k (q/\rho_2)_k(q)_{n-k} (q)_{n+k}}\left(\frac{q}{\rho_1\rho_2} \right)^k \alpha_k(1, q).
$$
Separating the terms corresponding to $k=0$ in both the summations and multiplying throughout by $(q/\rho_1)_n (q/\rho_2)_n$,
\begin{align*}
\sum_{k=1}^{n}\frac{(\rho_1)_k(\rho_2)_k (q/\rho_1\rho_2)_{n-k} (q/\rho_1 \rho_2)^k}{(q)_{n-k}}\beta_k(1, q)&=\frac{(q/\rho_1)_n (q/\rho_2)_n}{(q)_{n}^2}\left(1-\frac{(q)_n(q/\rho_1\rho_2)_n}{(q/\rho_1)_n (q/\rho_2)_n}\right) +  \\
& (q/\rho_1)_n (q/\rho_2)_n \sum_{k=1}^{n}\frac{(\rho_1)_k (\rho_2)_k(q/\rho_1 \rho_2)^k}{(q/\rho_1)_k (q/\rho_2)_k(q)_{n-k} (q)_{n+k}} \alpha_k(1, q).
\end{align*}
Dividing both sides by $(1-\rho_1)(1-\rho_2)$, then letting $\rho_1\rightarrow1$, $\rho_2\rightarrow1$ and using \eqref{limitresultp1p2}, we get
$$
\sum_{k=1}^{n}(q)^2_{k-1}q^k \beta_k=\sum_{k=1}^{n}\frac{q^k}{(1-q^k)^2}+\sum_{k=1}^{n}\frac{(q)^2_{n}q^{k}\alpha_k}{(q)_{n-k}(q)_{n+k}(1-q^k)^2}.
$$
This is the $k=1$ case of the theorem. We are going to prove the theorem using induction. To this end, suppose that the theorem holds for $k=\ell-1$.
This means that
\begin{align}\label{l-1induction}
\sum_{N\geq n_\ell\geq ...\geq n_2\geq1} \frac{(q)^2_{n_2}q^{n_2+...+n_\ell} \beta_{n_2}}{(1-q^{n_2})^2...(1-q^{n_\ell})^2}=\sum_{N\geq n_\ell\geq ...\geq n_2\geq1}\frac{q^{n_2+...+n_\ell} }{(1-q^{n_2})^2...(1-q^{n_\ell})^2} \nonumber \\ 
+\sum_{r=1}^{N}\frac{(q)^2_N}{(q)_{N-r}(q)_{N+r}}\frac{q^{(\ell-1)r}\alpha_r}{(1-q^{r})^{2(\ell-1)}}.
\end{align}
This equation is true for any Bailey pair $(\alpha_n(a, q), \beta_n(a, q))$ with $a=1$ and $\alpha_0=\beta_0=1$. Note that, since 
$\alpha'_0=\alpha_0=1$ and $\beta^{'}_0=\beta_0=1$, \eqref{l-1induction} also holds for the Bailey pair $(\alpha'_n(a, q), \beta^{'}_n(a, q))$. So, we replace
$(\alpha_n, \beta_n)$ by $(\alpha'_n, \beta^{'}_n)$ in \eqref{l-1induction} to get
\begin{align*}
\sum_{N\geq n_\ell\geq ...\geq n_2\geq1} \frac{(q)^2_{n_2}q^{n_2+...+n_\ell} \beta^{'}_{n_2}}{(1-q^{n_2})^2...(1-q^{n_\ell})^2}=\sum_{N\geq n_\ell\geq ...\geq n_2\geq1}\frac{q^{n_2+...+n_\ell} }{(1-q^{n_2})^2...(1-q^{n_\ell})^2} \nonumber \\ 
+\sum_{r=1}^{N}\frac{(q)^2_N}{(q)_{N-r}(q)_{N+r}}\frac{q^{(\ell-1)r}\alpha'_r}{(1-q^{r})^{2(\ell-1)}}.
\end{align*}
We now substitute for $\alpha'_n$ and $\beta^{'}_n$ in terms of $\alpha_n$ and $\beta_n$ using Bailey's Lemma,
\begin{align*}
&\sum_{\substack{N\geq n_\ell\geq ...\geq n_2\geq1,\\n_2\geq n_1\geq 0}} \frac{(q)^2_{n_2}q^{n_2+...+n_\ell}}{(1-q^{n_2})^2...(1-q^{n_\ell})^2}\frac{(\rho_1)_{n_1}(\rho_2)_{n_1}(q/\rho_1\rho_2)_{n_2-n_1}(q/\rho_1\rho_2)^{n_1}\beta_{n_1}}{(q/\rho_1)_{n_2}(q/\rho_2)_{n_2}(q)_{n_2-n_1}}\\
&=\sum_{N\geq n_\ell\geq ...\geq n_2\geq1}\frac{q^{n_2+...+n_\ell} }{(1-q^{n_2})^2...(1-q^{n_\ell})^2}+\sum_{r=1}^{N}\frac{(q)^2_N}{(q)_{N-r}(q)_{N+r}}\frac{q^{(\ell-1)r}(\rho_1)_r(\rho_2)_r(q/\rho_1\rho_2)^r\alpha_r}{(1-q^{r})^{2(\ell-1)}(q/\rho_1)_r(q/\rho_2)_r}.
\end{align*}
Again, separating the terms corresponding to $n_1=0$ from the sum on the left side, then dividing both sides by $(1-\rho_1)(1-\rho_2)$, letting $\rho_1\rightarrow1$, $\rho_2\rightarrow1$ and using \eqref{limitresultp1p2}, we obtain
\begin{align*}
\sum_{N\geq n_\ell\geq ...\geq n_1\geq1} \frac{(q)^2_{n_1}q^{n_1+n_2+...+n_\ell} \beta_{n_1}}{(1-q^{n_1})^2(1-q^{n_2})^2...(1-q^{n_\ell})^2}=&\sum_{N\geq n_\ell\geq ...\geq n_2\geq n_1\geq1}\frac{q^{n_1+n_2+...+n_\ell}}{(1-q^{n_1})^2(1-q^{n_2})^2...(1-q^{n_\ell})^2}. \nonumber \\ 
&+\sum_{r=1}^{N}\frac{(q)^2_Nq^{\ell r}\alpha_r}{(q)_{N-r}(q)_{N+r}(1-q^{r})^{2\ell}}.
\end{align*}
This concludes the proof of the theorem by induction.
\end{proof}

\begin{corollary}\label{mugfcorollary}
\begin{equation*}
(q)^2_N\sum_{r=1}^{N}\frac{(-1)^{r-1}  q^{\frac{r(r-1)}{2}+kr} (1+q^r)}{(q)_{N-r}(q)_{N+r}(1-q^{r})^{2k}}=
\sum_{N\geq n_k\geq ...\geq n_1\geq1}\frac{q^{n_1+n_2+...+n_k} }{(1-q^{n_1})^2(1-q^{n_2})^2...(1-q^{n_k})^2}.
\end{equation*}
\end{corollary}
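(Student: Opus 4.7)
The plan is to apply Proposition~\ref{generalBaileypair} with a carefully chosen Bailey pair. Specifically, I will take the pair with parameter $a = 1$ given by
\begin{equation*}
\alpha_0 = 1, \quad \alpha_n = (-1)^n (1+q^n) q^{n(n-1)/2} \text{ for } n \geq 1; \qquad \beta_0 = 1, \quad \beta_n = 0 \text{ for } n \geq 1.
\end{equation*}
Under this choice, the left-hand side of Proposition~\ref{generalBaileypair} vanishes identically (every $\beta_{n_1}$ appearing in the nested sum is zero), while the second sum on its right-hand side differs from the LHS of Corollary~\ref{mugfcorollary} only by the sign flip $(-1)^r = -(-1)^{r-1}$, so it equals $-[\text{LHS of Cor.~\ref{mugfcorollary}}]$. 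Since the first sum on the right of the proposition is exactly the RHS of the corollary, the proposition collapses to the desired identity $0 = [\text{RHS of cor}] - [\text{LHS of cor}]$.

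The only substantive step is verifying that $(\alpha_n,\beta_n)$ is a Bailey pair. Writing out \eqref{Baileydefn}, this amounts to the finite identity
\begin{equation*}
\sum_{r=1}^{n} (-1)^{r-1}(1+q^r) q^{r(r-1)/2}\,\frac{(q)_{2n}}{(q)_{n-r}(q)_{n+r}} \;=\; \frac{(q)_{2n}}{(q)_n^{2}} \qquad (n \geq 1).
\end{equation*}
I plan to derive this by evaluating the finite $q$-binomial theorem
$(z;q)_{2n} = \sum_{k=0}^{2n}(-z)^k q^{k(k-1)/2}\,(q)_{2n}/\bigl((q)_k(q)_{2n-k}\bigr)$
at $z = q^{-n}$: the left side vanishes because $(q^{-n};q)_{2n}$ contains the factor $1 - q^{0} = 0$ at $i = n$. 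After substituting $k = n+r$ and simplifying the exponent via
\begin{equation*}
\tfrac{(n+r)(n+r-1)}{2} - n(n+r) \;=\; -\tfrac{n(n+1)}{2} + \tfrac{r(r-1)}{2},
\end{equation*}
this reduces to the symmetric vanishing identity
\begin{equation*}
\sum_{r=-n}^{n} (-1)^r q^{r(r-1)/2}\,\frac{(q)_{2n}}{(q)_{n-r}(q)_{n+r}} \;=\; 0.
\end{equation*}
Splitting off the $r = 0$ summand (which contributes $(q)_{2n}/(q)_n^{2}$) and pairing each $r > 0$ with its negative counterpart, using the symmetry $(q)_{n-r}(q)_{n+r} = (q)_{n+r}(q)_{n-r}$ together with $(-r)(-r-1)/2 = r(r+1)/2$, combines the two exponentials into the factor $q^{r(r-1)/2}(1+q^r)$ and produces exactly the displayed identity.

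The principal obstacle is this Bailey pair verification; the indexing and sign bookkeeping are delicate, but no machinery beyond the elementary $q$-binomial theorem is required. Once it is in hand, substitution into Proposition~\ref{generalBaileypair} is purely mechanical. As a sanity check, letting $N \to \infty$ in the resulting corollary recovers Garvan's identity~\eqref{mugf_alternate}.
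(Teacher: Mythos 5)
Your proposal is correct and follows essentially the same route as the paper: substitute the Bailey pair $\alpha_0=\beta_0=1$, $\alpha_n=(-1)^n(1+q^n)q^{n(n-1)/2}$, $\beta_n=0$ ($n\geq 1$) into Proposition~\ref{generalBaileypair} and read off the identity. The only difference is that you verify this Bailey pair from scratch via the finite $q$-binomial theorem at $z=q^{-n}$ (a correct computation), whereas the paper simply cites it as well known from Andrews' CBMS lectures.
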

\begin{proof}
Consider the well known Bailey pair below (\cite[pp. ~27-28]{andrewsqseries}),
\begin{equation*}
\alpha_n=
\begin{cases}
1, &\; \text{if} \;\;n=0, \\
(-1)^n q^{\frac{n(n-1)}{2}} (1+q^n),& \;\text{if}\;\;n\geq1,
\end{cases}
\end{equation*}
and
\begin{equation*}
\beta_{n}=
\begin{cases}
1, &\; \text{if} \;\;n=0, \\
0,& \;\text{if}\;\;n\geq1.
\end{cases}
\end{equation*}
Substituting the above Bailey pair in Theorem \ref{generalBaileypair}, we get
\begin{align*}
0=\sum_{N\geq n_k\geq ...\geq n_1\geq1}\frac{q^{n_1+n_2+...+n_k} }{(1-q^{n_1})^2(1-q^{n_2})^2...(1-q^{n_k})^2}
+(q)^2_N\sum_{r=1}^{N}\frac{(-1)^r q^{\frac{r(r-1)}{2}+kr} (1+q^r)}{(q)_{N-r}(q)_{N+r}(1-q^{r})^{2k}}.
\end{align*}
\end{proof}
\begin{corollary}
\begin{align}\label{mugfalternate}
\sum_{n=1}^{\infty}\mu_{2k,N}(n)q^n=
\frac{1}{(q)_N}\sum_{N\geq n_k\geq ...\geq n_1\geq1}\frac{q^{n_1+n_2+...+n_k} }{(1-q^{n_1})^2(1-q^{n_2})^2...(1-q^{n_k})^2}.
\end{align}
\end{corollary}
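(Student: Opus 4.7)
The plan is to combine the two previous results in this section directly. Specifically, Theorem \ref{theoremmugf} (specialized to $\nu = k$) gives the closed form
\begin{equation*}
\sum_{n=1}^{\infty}\mu_{2k,N}(n)q^n = (q)_N \sum_{n=1}^{N}\frac{(-1)^{n-1} q^{\frac{n(n-1)}{2}+k n}(1+q^n)}{(q)_{N+n}(q)_{N-n}(1-q^n)^{2k}},
\end{equation*}
while Corollary \ref{mugfcorollary} identifies $(q)_N^2$ times that same alternating sum with the nested-sum expression on the right-hand side of \eqref{mugfalternate}.

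First I would divide the identity in Corollary \ref{mugfcorollary} through by $(q)_N$, obtaining
\begin{equation*}
(q)_N \sum_{r=1}^{N}\frac{(-1)^{r-1} q^{\frac{r(r-1)}{2}+kr}(1+q^r)}{(q)_{N-r}(q)_{N+r}(1-q^{r})^{2k}} = \frac{1}{(q)_N}\sum_{N\geq n_k\geq \cdots \geq n_1\geq 1}\frac{q^{n_1+n_2+\cdots+n_k}}{(1-q^{n_1})^2(1-q^{n_2})^2\cdots(1-q^{n_k})^2}.
\end{equation*}
Then I would recognize that the left-hand side is precisely the expression given for $\sum_{n=1}^{\infty}\mu_{2k,N}(n)q^n$ by Theorem \ref{theoremmugf} with $\nu = k$ (after renaming the summation variable from $n$ to $r$). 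Substituting yields the claimed formula \eqref{mugfalternate}.

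There is effectively no obstacle here: the corollary has already done all the analytic work by extracting the Bailey-type identity in Proposition \ref{generalBaileypair} and applying it to the unit Bailey pair in Corollary \ref{mugfcorollary}. The present corollary is just the bookkeeping step that matches the Bailey side of Corollary \ref{mugfcorollary} with the generating-function side of Theorem \ref{theoremmugf}. If anything merits a remark, it is that this multi-sum form \eqref{mugfalternate} is the natural finite analogue of Garvan's formula \eqref{mugf_alternate}, and it is precisely this shape that will be needed in the subsequent proof of Conjecture \ref{finiteanalogcrankrankconjecture}, since term-by-term positivity of the difference with the corresponding rank expression becomes transparent only in this representation.
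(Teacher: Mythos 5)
Your proposal is correct and follows exactly the route the paper takes: divide the identity of Corollary \ref{mugfcorollary} by $(q)_N$ and identify the resulting left-hand side with the expression for $\sum_{n\geq 1}\mu_{2k,N}(n)q^n$ given by equation \eqref{mugf} of Theorem \ref{theoremmugf} with $\nu=k$. There is nothing to add.
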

\begin{proof}
Using equation \eqref{mugf} from Theorem \ref{theoremmugf} along with Corollary \ref{mugfcorollary}, we get this result. Note that this is a finite analogue of \eqref{mugf_alternate}. 
\end{proof}

\begin{corollary}\label{crudediffsymmoments}
\begin{align*}
&\sum_{N\geq n_k\geq ...\geq n_1\geq1} \frac{(q)_{n_1}q^{n_1+n_2+...+n_k}}{(1-q^{n_1})^2(1-q^{n_2})^2...(1-q^{n_k})^2}= \nonumber\\
&\sum_{N\geq n_k\geq ...\geq n_1\geq1}\frac{q^{n_1+n_2+...+n_k} }{(1-q^{n_1})^2(1-q^{n_2})^2...(1-q^{n_k})^2}
+(q)^2_N\sum_{r=1}^{N}\frac{(-1)^r q^{\frac{r(3r-1)}{2}+kr} (1+q^r)}{(q)_{N-r}(q)_{N+r}(1-q^{r})^{2k}}.
\end{align*}
\end{corollary}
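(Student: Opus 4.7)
The plan is to deduce this from Proposition \ref{generalBaileypair} by substituting a second Bailey pair, in parallel with the derivation of Corollary \ref{mugfcorollary}. There, the ``trivial'' Bailey pair with $\beta_n=\delta_{n,0}$ was used, so the nested sum on the left of Proposition \ref{generalBaileypair} collapsed entirely. In the present case the left-hand side carries an extra factor $(q)_{n_1}$, so I need a pair whose $\beta_{n_1}$ absorbs precisely one copy of $(q)_{n_1}$ from the $(q)^2_{n_1}\beta_{n_1}$ appearing on the left of Proposition \ref{generalBaileypair}, and whose $\alpha_r$ produces the pentagonal-number exponent $q^{r(3r-1)/2}(1+q^r)$ that appears on the right of the claim.

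The natural candidate is the classical unit Bailey pair (relative to $a=1$) arising from Euler's pentagonal number theorem, namely
$$\alpha_0 = 1, \qquad \alpha_n = (-1)^n q^{n(3n-1)/2}(1+q^n) \text{ for } n\geq 1, \qquad \beta_n = \frac{1}{(q)_n}.$$
To confirm this is a Bailey pair with $a=1$ one checks that $\sum_{r=0}^{n} \alpha_r / ((q)_{n-r}(q)_{n+r}) = 1/(q)_n$ for every $n\geq 0$; after multiplying through by $(q)_n$, this is the familiar finite form of the pentagonal number theorem, which can also be obtained by a short induction on $n$. In any case it is a standard entry in the literature on Bailey pairs. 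Since $\alpha_0=\beta_0=1$, Proposition \ref{generalBaileypair} applies.

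The rest is a mechanical substitution. On the left of Proposition \ref{generalBaileypair}, the factor $(q)^2_{n_1}\beta_{n_1}$ collapses to $(q)_{n_1}$, producing the left-hand side of the corollary exactly. On the right, the first nested sum is unchanged, while the inflated single sum becomes
$$(q)^2_N \sum_{r=1}^{N} \frac{(-1)^r\, q^{r(3r-1)/2 + kr}(1+q^r)}{(q)_{N-r}(q)_{N+r}(1-q^r)^{2k}},$$
which is precisely the second term on the right-hand side of the claim. The identity follows.

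The only real obstacle is recognising the correct Bailey pair; once it is in hand the corollary is a one-line consequence of Proposition \ref{generalBaileypair}. The shape of the desired identity essentially forces the choice, and, reassuringly, this is the same Bailey pair underlying Garvan's derivation of the $N\to\infty$ analogue \eqref{difference_gen_garvan}.
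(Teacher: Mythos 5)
Your proof is correct and is exactly the paper's argument: the authors likewise substitute the Bailey pair $\alpha_n=(-1)^nq^{n(3n-1)/2}(1+q^n)$ ($n\geq1$), $\beta_n=1/(q)_n$ into Proposition \ref{generalBaileypair}, so that $(q)^2_{n_1}\beta_{n_1}$ collapses to $(q)_{n_1}$ on the left and $\alpha_r$ supplies the pentagonal-number term on the right. No differences worth noting.
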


\begin{proof}
Again we use a well known Bailey pair (\cite[p.~28]{andrewsqseries}),
\begin{equation*}
\alpha_n=
\begin{cases}
1, &\; \text{if} \;\;n=0, \\
(-1)^n q^{\frac{n(3n-1)}{2}} (1+q^n),& \;\text{if}\;\;n\geq1,
\end{cases}
\ \ \text{and} 
\ \ \beta_n=\frac{1}{(q)_n}.
\end{equation*}
Putting the values of $\alpha_n$ and $\beta_n$ in Theorem \ref{generalBaileypair}, we get the result.
 \end{proof}
Now we are ready to give a proof of Theorem \ref{difference_gen_fin_rank_crank}.
\begin{proof}[Theorem \textup{\ref{difference_gen_fin_rank_crank}}][]
Divide both sides of \eqref{crudediffsymmoments} by $(q)_N$ to get 

\begin{align*}
&\frac{1}{(q)_N}\sum_{N\geq n_k\geq ...\geq n_1\geq1}\frac{(q)_{n_1}q^{n_1+n_2+...+n_k}}{(1-q^{n_1})^2(1-q^{n_2})^2...(1-q^{n_k})^2}=\\
&\frac{1}{(q)_N}\sum_{N\geq n_k\geq ...\geq n_1\geq1}\frac{q^{n_1+n_2+...+n_k} }{(1-q^{n_1})^2(1-q^{n_2})^2...(1-q^{n_k})^2}
+(q)_N\sum_{r=1}^{N}\frac{(-1)^r q^{\frac{r(3r-1)}{2}+kr} (1+q^r)}{(q)_{N-r}(q)_{N+r}(1-q^{r})^{2k}}.
\end{align*}
Using \eqref{mugfalternate} and equation \eqref{gfsymrankmoment} from Theorem \ref{gffinsymmetrizedrankfn}, we get the desired result.
\end{proof}

Before going to the proof of Conjecture \ref{finiteanalogcrankrankconjecture}, we require one more concept, an analogue of Stirling numbers of the second kind, defined by Garvan \cite{garvan11}. 
He defined a sequence of polynomials
\begin{align*}
g_k(x)=\prod_{j=0}^{k-1}(x^2-j^2), \;\;\; \text{for} \;\;k\geq1
\end{align*}
and a sequence of numbers $S^*(n,k)$ such that, for $n\geq1$,
\begin{equation}\label{polynomialSnk}
x^{2n}=\sum_{k=1}^{n}S^*(n,k)g_k(x).
\end{equation}
\textbf{Definition} \cite[p.~249]{garvan11}: Define the sequence $S^*(n,k)$, for $1 \leq k \leq n$, recursively by\\
(i) $S^*(1,1)=1$,\\
(ii) $S^*(n,k)=0$ if $k\leq0$ or $k>n$,\\
(iii) $S^*(n+1,k)=S^*(n,k-1)+k^2S^*(n,k)$, for $1\leq k\leq n+1$.\\
From this definition, Garvan showed that the relation \eqref{polynomialSnk} indeed holds (\cite[Lemma 4.2]{garvan11}).
Next, we link the finite analogues of rank and crank moments with their symmetrized counterparts via the numbers $S^*(n,k)$.
\begin{proposition}
For any two positive integers $k$ and $N$,
\begin{align}
\mu_{2k,N}(n)&=\frac{1}{(2k)!}\sum_{m=-n}^{n}g_k(m)M_{S_2}(m,n),\label{fg1}\\
\eta_{2k,N}(n)&=\frac{1}{(2k)!}\sum_{m=-n}^{n}g_k(m)N_{S_1}(m,n),\label{fg2}\\
M_{2k,N}(n)&=\sum_{j=1}^{k}(2j)!S^*(k,j)\;\mu_{2j,N}(n),\label{fg3}\\
N_{2k,N}(n)&=\sum_{j=1}^{k}(2j)!S^*(k,j)\;\eta_{2j,N}(n).\label{fg4}
\end{align}
\end{proposition}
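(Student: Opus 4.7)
The four identities split into two pairs. For \eqref{fg1} and \eqref{fg2}, the key observation will be that the finite analogues $M_{S_2}(m,n)$ and $N_{S_1}(m,n)$ are symmetric in $m$, namely $M_{S_2}(m,n) = M_{S_2}(-m,n)$ and $N_{S_1}(m,n) = N_{S_1}(-m,n)$. This follows at once from the invariance of the respective generating functions \eqref{fincrankgf} and \eqref{RS1zqdefinition} under the substitution $z \mapsto z^{-1}$, since both $C_{S_2}(z;q) = (q)_N/((zq)_N(z^{-1}q)_N)$ and each summand $q^{j^2}(q)_j/((zq)_j(z^{-1}q)_j)$ are manifestly invariant under $z\leftrightarrow z^{-1}$.

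To exploit the symmetry, write $\binom{m+\nu-1}{2\nu} = p(m)/(2\nu)!$ where $p(m) := \prod_{j=-\nu}^{\nu-1}(m+j)$. Factoring out the common block $\prod_{j=-\nu+1}^{\nu-1}(m+j)$ from both $p(m)$ and $p(-m)$ yields
\[
p(m)+p(-m) \;=\; \bigl[(m-\nu)+(m+\nu)\bigr]\prod_{j=-\nu+1}^{\nu-1}(m+j) \;=\; 2m^2\prod_{j=1}^{\nu-1}(m^2-j^2) \;=\; 2\,g_\nu(m).
\]
Combined with the symmetry of $M_{S_2}$, this gives
\[
\mu_{2\nu,N}(n) \;=\; \frac{1}{(2\nu)!}\sum_{m}p(m)\,M_{S_2}(m,n) \;=\; \frac{1}{(2\nu)!}\sum_{m}\frac{p(m)+p(-m)}{2}\,M_{S_2}(m,n) \;=\; \frac{1}{(2\nu)!}\sum_{m}g_\nu(m)\,M_{S_2}(m,n),
\]
which is \eqref{fg1}. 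The identical argument with $N_{S_1}$ in place of $M_{S_2}$ delivers \eqref{fg2}.

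For \eqref{fg3} and \eqref{fg4}, the plan is to substitute the expansion \eqref{polynomialSnk}, i.e.\ $m^{2k} = \sum_{j=1}^{k} S^*(k,j)\,g_j(m)$, into the definitions $M_{2k,N}(n)=\sum_m m^{2k}M_{S_2}(m,n)$ and $N_{2k,N}(n)=\sum_m m^{2k}N_{S_1}(m,n)$ from \eqref{finrankmom}--\eqref{fincrankmom}. Interchanging the order of summation and applying \eqref{fg1}--\eqref{fg2} converts $\sum_m g_j(m)M_{S_2}(m,n)$ into $(2j)!\,\mu_{2j,N}(n)$, and likewise on the rank side, producing \eqref{fg3} and \eqref{fg4} directly.

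The whole argument is essentially mechanical once the polynomial identity $p(m)+p(-m)=2g_\nu(m)$ is verified; that pairing-off is the one step requiring any care, and there is no genuine obstacle beyond it.
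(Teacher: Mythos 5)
Your proposal is correct and follows essentially the same route as the paper: both proofs reduce to the symmetry $M_{S_2}(m,n)=M_{S_2}(-m,n)$ and $N_{S_1}(m,n)=N_{S_1}(-m,n)$ to discard the odd part of the falling-factorial polynomial $\prod_{j=-k}^{k-1}(m+j)$, leaving $g_k(m)$, and then obtain \eqref{fg3}--\eqref{fg4} by substituting $m^{2k}=\sum_j S^*(k,j)g_j(m)$ and interchanging sums. Your symmetrization $\tfrac{1}{2}(p(m)+p(-m))=g_k(m)$ is just a tidier packaging of the paper's explicit split $m(m-k)=m^2-km$ followed by the vanishing of the odd sum, and your derivation of the symmetry from the $z\leftrightarrow z^{-1}$ invariance of the generating functions is a valid substitute for the paper's citation of it.
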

\begin{proof}
By the definition of finite analogue of $k^{th}$ symmetrized crank moment, we know that
\begin{align*}
\mu_{2k,N}(n)&=\sum_{m=-n}^{n} \left(\begin{matrix} m+k-1 \\ 2k\end{matrix}\right)M_{S_2}(m,n)\\
&=\frac{1}{(2k)!}\sum_{m=-n}^{n}\left(m^2-(k-1)^2\right)
\left(m^2-(k-2)^2\right)
\dots\big(m^2- 1^2\big)
m(m-k)M_{S_2}(m,n).
\end{align*} 
By the definition of the polynomials $g_k$, this may be written as
\begin{align*}
\mu_{2k,N}(n)=\frac{1}{(2k)!}\sum_{m=-n}^{n}g_k(m)M_{S_2}(m,n)-\frac{k}{(2k)!}\sum_{m=-n}^{n}\left(m^2-(k-1)^2\right)\dots\big(m^2-1^2\big)mM_{S_2}(m,n).
\end{align*}
Since $M_{S_2}(m,n)=M_{S_2}(-m,n)$ \cite[p.~9]{DEMS}, the rightmost sum vanishes and we get \eqref{fg1}. Similarly one can prove \eqref{fg2}.
Now for the proof of \eqref{fg3}, we start with the definition of $M_{2k,N}(n)$ \eqref{fincrankmom}, namely, 
\begin{align*} 
M_{2k,N}(n)=\sum_{m=-n}^{n}m^{2k}M_{S_2}(m,n).
\end{align*}
We use \eqref{polynomialSnk} to substitute for $m^{2k}$ and obtain
\begin{align*} 
M_{2k,N}(n)&=\sum_{m=-n}^{n}\left(\sum_{j=1}^kS^*(k,j)g_j(m)\right)M_{S_2}(m,n)\\
&= \sum_{j=1}^{k}S^*(k,j)\sum_{m=-n}^{n}g_j(m)M_{S_2}(m,n)\\
&=\sum_{j=1}^k(2j)!S^*(k,j)\mu_{2j,N}(n),
\end{align*}
the last step following from \eqref{fg1}. This completes the proof of \eqref{fg3} and on similar lines we can prove \eqref{fg4}.
\end{proof}
We are now ready to prove the inequality for the finite analogues of rank and crank moments.
\begin{proof}[Conjecture \textup{\ref{finiteanalogcrankrankconjecture}}][]
From \eqref{fg3} and \eqref{fg4}, we get 
\begin{equation}\label{diffmoments}
M_{2k,N}(n)-N_{2k,N}(n)=\sum_{j=1}^{k}(2j)!S^*(k,j)\;(\mu_{2j,N}(n)-\eta_{2j,N}(n)).
\end{equation}
From Theorem \ref{difference_gen_fin_rank_crank}, we know
$$\sum_{n=1}^{\infty}(\mu_{2t,N}(n)q^n-\eta_{2t,N}(n))q^n=\sum_{N\geq n_t\geq ...\geq n_1\geq1}\frac{q^{n_1+n_2+...+n_t}}{(1-q^{n_1})^2(1-q^{n_2})^2...(1-q^{n_t})^2(q^{n_1 + 1})_{N-n_1}}.$$
From the generating function, we infer that, $\mu_{2t,N}(n)-\eta_{2t,N}(n)\geq 0$ for $n, t, N \geq 1$. Moreover, the numbers $S^*(k,j)$ are all positive, so from \eqref{diffmoments}, we can write
$$M_{2k,N}(n)-N_{2k,N}(n) \geq 2(\mu_{2,N}(n)-\eta_{2,N}(n)) = 2\textup{spt}(n, N)>0,$$
where the last equality follows from Remark \ref{remark_fin_higher_spt}. This finishes the proof of the conjecture. 
\end{proof}

 \section{Concluding Remarks}
In Remark \ref{remark_fin_higher_spt}, we defined a finite analogue of higher order spt-function as 
$\textup{spt}_k(n,N):= \mu_{2k,N}(n)-\eta_{2k,N}(n)$.
A combinatorial interpretation of the higher order spt-function $\text{spt}_{k}(n)$ was described by Garvan \cite[p.~252]{garvan11}. Looking at the generating function \eqref{gen_fin_rank_crank} of the difference between finite analogues of the symmetrized moments and comparing it with \eqref{difference_gen_garvan}, one can give a combinatorial interpretation of $\text{spt}_k(n,N)$ on similar lines as that of Garvan's for $\text{spt}_k(n)$, the only restriction being that the largest parts of the corresponding partitions are less than or equal to $N$. 

Bringmann, Mahlburg and Rhoades \cite{BMR11} showed that, for any $k \geq 1$, as $n \rightarrow \infty$, 
\begin{align*}
& M_{2k}(n) \sim  N_{2k}(n) \sim \alpha_{2k} n^k p(n),\\
& M_{2k}(n)-N_{2k}(n) \sim \beta_{2k} n^{k- \frac{1}{2}} p(n),
\end{align*}
where $\alpha_{2k}, \beta_{2k}$ are certain explicitly computable constants (see \cite[p. 665, Corollary 1.4]{BMR11}). Since in this paper, we have proved the inequality for the finite analogues of rank and crank moments, it would be fascinating to find the asymptotic behavior of the finite analogues and their difference.

 Given any prime $p>3$ and for fixed positive integers $k$ and $j$,  Bringmann, Garvan and Mahlburg \cite[Corollary 1.3]{BGM09} established that there are infinitely many arithmetic progressions $A n + B$ such that
$\eta_{2k}(An+B) \equiv 0 \pmod {p^j}$.
It would be worthwhile to see if such congruences exist for $\eta_{2k,N}(n)$.

A number of explicit congruences for higher order $\text{spt}$-functions were proved by Garvan \cite[Theorem 6.1--6.3]{garvan11}. It would also be interesting to see if there exists a refinement of these congruences for $\text{spt}_{k}(n,N)$.

\textbf{Acknowledgements}
We would like to thank Prof.~Atul Dixit for going through the manuscript and giving valuable suggestions. The first author wishes to thank Harish-Chandra Research Institute and IIT Gandhinagar for the conducive environment. The second author is a SERB  National Post Doctoral Fellow (NPDF) supported by the fellowship PDF/2017/000370. The third author is supported partially by IIT Gandhinagar and by SERB ECR grant ECR/2015/000070 of Prof. Atul Dixit.


\begin{thebibliography}{00}

%
%
%
%
%
%




\bibitem{andrewsqseries}
G.~E.~Andrews, q-series: their development and application in analysis, number theory, combinatorics, physics, and computer algebra, CBMS Regional Conference Series in Mathematics, 66, 
American Mathematical Society, Providence, RI, 1986.

\bibitem{andpar}
G.~E.~Andrews, \emph{Ramanujan and partial fractions}, Contributions to the History of Indian Mathematics, Hindustan Book Agency, New Delhi, 2005.

\bibitem{andrews07}G.E. Andrews, \emph{Partitions, Durfee symbols, and the Atkin-Garvan moments of ranks}, Invent. Math. 169 (2007) 37--73.

\bibitem{andrews08} G.~E.~Andrews, \emph{The number of smallest parts in the partitions on $n$}, J. Reine Angew. Math. {\bf 624} (2008), 133--142.


\bibitem{andrewsaskey99} G.~E.~Andrews, R.~Askey, R.~Roy, Special functions. Encyclopedia of Mathematics and its Applications, 71. Cambridge University Press, Cambridge, 1999
 
\bibitem{abramlostI} G.~E.~Andrews, B.~C.~Berndt, Ramanujan's Lost Notebook, Part I, Springer, New York, 2005. 




\bibitem{andrewsgarvan88} G.~E.~Andrews and F.~G. ~Garvan, \emph{Dyson's crank of a partition}, Bull. Amer. Math. Soc. (N.S.) {\bf 18} (1988) 167--171.




\bibitem{atkin1} A. O. L. Atkin and F. Garvan, \emph{Relations between the ranks and cranks of partitions}, Ramanujan J., 7 (2003), 343--366.

\bibitem{atkin2} A.~O.~L.~Atkin and H.~P.~F. ~Swinnerton-Dyer, \emph{Some properties of partitions}, Proc. London Math. Soc., Ser. 3, 4 (1954), 84--106.




%
\bibitem{bcbramforthnote}
B.~C.~Berndt, Ramanujan's Notebooks, Part IV, Springer-Verlag, New York, 1991.



%
%

\bibitem{BM09} K.~Bringmann, K.~Mahlburg, \emph{Inequalities between ranks and cranks}, Proc.~Amer.~Math.~Soc., {\bf 137},  2009, 2567--2574.

\bibitem{BGM09} K.~Bringmann, F.~Garvan, and K.~Mahlburg, \emph{Partition statistics and quasiharmonic Maass forms}, Int.~Math.~Res.~Not. IMRN 2009 (1) (2009) 63--97.


\bibitem{BMR11} K.~Bringmann, K.~Mahlburg, R.~C.~Rhoades, \emph{Asymptotics for rank and crank moments}, Bull. London Math. Soc.,  {\bf 43} (2011) 661--672.


%
%
%


\bibitem{dixitmaji18} A.~Dixit and B.~Maji, \emph{Partition implications of a three parameter $q$-series identity}, to appear in Ramanujan J., arXiv:1806.04424

\bibitem{DEMS} A.~Dixit, P.~Eyyunni, B.~Maji and G.~Sood, \emph{Untrodden pathways in the theory of the restricted partition function p(n,N)}, arXiv:1812.01424, 2018.

\bibitem{dys}F.J.~Dyson, \emph{Some guesses in the theory of partitions}, Eureka (Cambridge) \textbf{8} (1944), 10--15.

\bibitem{dyson89} F.~J.~Dyson, \emph{Mappings and symmetries of partitions}, J. Combin. Theory Ser. A {\bf 51} (1989) 169--180.

\bibitem{fine} N. J. Fine, Basic Hypergeometric Series and Applications, Mathematical Surveys and Monographs, Amer. Math. Soc., 1989. 



\bibitem{garvan88} F.~G.~Garvan, \emph{New combinatorial interpretations of Ramanujan's partition congruences mod $5, 7$ and $11$}, Trans.~ Amer.~Math.~Soc. {\bf 305} (1988) 47--77.

\bibitem{garvan10} F.~G.~Garvan, \emph{Congruences for Andrews' smallest parts partition function and new congruence for Dyson's rank}, Int.~J.~Number~Theory~, {\bf 6}, (2010) 281--309.

\bibitem{garvan11} F.~G.~Garvan, \emph{Higher order spt-function}, Adv.~Math. {\bf 228} (2011) 241--265.






%
%
%
%



%

%
%
\bibitem{ramanujantifr}
S.~Ramanujan, Notebooks of Srinivasa Ramanujan, Vol. II, Tata Institute of Fundamental Research, Mumbai, 2012.


%
%
%
%
%


%
%
%
%


\end{thebibliography}
\end{document}